\theoremstyle{plain}
\def \<{\langle}
\def \>{\rangle}
\begin{document}

\title{Isometry group of Borel randomizations}

 \date{\today}

\author{Alexander Berenstein}
\address{Universidad de los Andes,
Cra 1 No 18A-10, Bogot\'{a}, Colombia}
 \urladdr{www.matematicas.uniandes.edu.co/\textasciitilde aberenst}

\author{Rafael Zamora}
\address{Escuela de Matem\'atica, sede Rodrigo Facio, Universidad de Costa Rica, Montes de Oca, San Jose, Costa Rica}
\email{rafael.zamora\_c@ucr.ac.cr}

\keywords{Topological groups, isometry groups, Borel randomizations, Rohklin property, topometric spaces, topometric groups.}
\subjclass[2010]{Primary: 03E15, 54H11 Secondary: 22F50, 22A05, }
\thanks{We would like to thank Julien Melleray for suggesting the topological approach to the problem presented in section \ref{Den2}. We would also like to thank John K. Truss for the information he provided to us which allowed us to extend our result to $\mbox{Aut}(Q,<)$. The second author would like to thank the department of mathematics at Universidad de los Andes for its hospitality during his stay, which resulted in part of this work.}

\begin{abstract}
We study global dynamical properties of the isometry group of the Borel randomization of a separable 
complete structure. In particular, we show that if properties such as the Rohklin property, topometric generics, extreme amenability hold for the isometry group of the structure, they also hold in the isometry group of the randomization.
\end{abstract}

\pagestyle{plain}

\maketitle

\section{Introduction}\label{Intro}
This paper deals with structural properties of isometry groups of randomizations of metric structures (see \cite{BK}), in particular the existence of generic elements. 

The setting is the following. Given $G$ a Polish group, we say that an element $g\in G$ is generic if its orbit under conjugation $\{g^h:h\in G\}$ is comeager and we say that $G$ has \emph{generics} if it has a generic element. We say that an $n$-tuple $(g_1,\dots,g_n)$ is generic if its orbit under the action of $G$ by pointwise conjugation $\{(g_1^h,\dots,g_n^h) : h\in G\}$ is comeager. Finally we say that $G$ has ample generics if for every $n\geq 1$, $G$ has generic $n$-tuples.

Ample generics where introduced by Hodges, Hodkinson, Lascar and Shelah in \cite{HHLS} and some of its consequences were explored by Kechris and Rosendal in \cite{KR}. Among other properties, they showed that if a Polish group has ample generics, then it has the automatic continuity property, namely, any homomorphism from $G$ to a Polish group $H$ is always continuous.

The examples studied in \cite{KR} are subgroups of $S_\infty$ that arise as automorphism groups of Fra\"iss\'e limits whose partial automorphisms have nice amalgamation properties. They include  the permutation group $S_\infty$ of $\mathbb{N}$, the automorphism group of the random graph, the automorphism group of a countably dimensional vector space over a finite field and the automorphism group of the rational Urysohn space. A group can have generics and fail to have ample generics (for example $Aut(Q,<)$). All these examples are totally disconnected.

The work of Ka\"ichouh and Le Ma\^itre \cite{KL} shows how to build connected examples. Let $L^0([0,1],G)$ denote the space of measurable functions from $[0,1]$ to $G$, which is a group with pointwise multiplication and Polish with the topology of convergence in measure. They prove that if $G$ has generics (respectively ample generics), then $L^0([0,1],G)$ has generics (respectively ample generics).

On the other hand, several Polish groups that arise as the group of isometries of metric structures
do not have generics or ample generics but weaker properties called \emph{metric generics} and \emph{metric ample generics} that were introduced and studied in \cite{BBM}. This is the case for the group of isometries $Aut([0,1])$ of the measure algebra associated standard Lebesgue space, the group of isometries of the Urysohn space and the group of isometries of a separable Hilbert space \cite{BBM}. 
The main idea behind this approach is to endow a group of isometries $G$ of a metric space (or even a metric structure) $(M,d)$ with two topologies, the one of pointwise convergence (which is Polish) and the one of uniform convergence (which is finer than the previous one and in general is not Polish). The interaction of the Polish topology and the uniform convergence topology gives a notion of relative continuity for group homomorphisms that replaces the usual notion of automatic continuity (see \cite{BBM}). In this paper we apply the ideas of \cite{BBM}) to the isometry group of a randomization of a countable first order structure $M$.
Ibarluc\'ia \cite{Ib} characterized this group in terms of the group of isometries of $M$. He showed that it can be writen as $\tilde G=L^0([0,1],G) \rtimes Aut([0,1])$ where $G=Isom(M)$ and for 
$\alpha \in L^0([0,1],M)$ and $(f,T)\in L^0([0,1],G) \rtimes Aut([0,1])$, the action is given by $((f,T)(\alpha))(\omega)=f(\omega)\alpha(T^{-1}(\omega))$.

The group $\tilde G$, being a group of isometries, can be endowed with the topology of pointwise convergence and the topology of uniform convergence. Ibarluc\'ia \cite{Ib} proved that the pointwise convergence topology is the product topology of $L^0([0,1],G)$ and $Aut([0,1])$. We show a similar result 
and prove that the uniform convergence topology is the product of an induced uniform topology in $L^0([0,1],G)$ and the uniform topology in $Aut([0,1])$. With these tools we show that if $G$ has metric generics
(respectively metric ample generics), then $\tilde G$ has metric generics
(respectively metric ample generics). We also prove that if $G$ is extremely amenable, then so is $\tilde G$.

As a corollary of our work we can also show that if $H$ and $G$ are isometry groups and both have metric ample generics,
then when we equip $H \rtimes G$ with the Polish product topology and the with the uniform convergence 
topology coming from the product of the corresponding uniform topologies, then $H \rtimes G$ also has metric ample 
generics. Similarly we prove that if the Rohklin property holds for $H$ and $G$ then it also holds for $H \rtimes G$.

We should point out that many model theoretic properties transfer from $T=Th(M)$ to the theory of its randomization $T^R$ such as $\omega$-categority, $\omega$-stability and NIP \cite{BK,BY}. The work of 
Ibarluc\'ia \cite{Ib} also shows that some properties of $G$ are reflected in $\tilde G$ such as 
being reflexively representable. This, together with the results of  Ka\"ichouh and Le Ma\^itre, was a strong indication that we also could expect some nice behavior at the level of generics in the automorphism group.

This paper is organized as follows. In section \ref{KlM3} we follow the work of \cite{KL} and show that if
$G$ is a Polish group and has metric generics (respectively metric ample generics), so does $L^0([0,1],G)$.
In section \ref{uct} we characterize the uniform convergence topology in $\tilde G$ when we see it as the isometry group of a randomization. In section \ref{Den} we approach the problem of existence of dense orbits in $\tilde S_\infty$. We handle the problem from an algebraic perspective and we extend the result to a robust class of Polish groups. In section \ref{Den2}, we follow a more topological approach and show that dense orbits are transferred from $G$ to $\tilde G$ as well as topometric ample generics. In section \ref{EAm} we show that extreme amenability transfers from $G$ to $\tilde G$.

Finally, we should point out that there are other approaches to study automatic continuity in isometry groups such as those in the work of Kwiatkowska, Malicki and Sabok \cite{KM,Ma,Sa}. The notes by  Ka\"ichouh \cite{Ka} are a good reference for the subject.

\section{Transferring topometric generics to $L^0([0,1],G)$}\label{KlM3}

In this section we generalize the ideas of Ka\"ichouh-Le Maitre and show how the map that sends $G$ to $L^0([0,1],G)$ not only preserves generics and ample generics but also metric generics and ample metric generics. We start by recalling some definitions from \cite{BBM}.

\begin{defi}
We say that a triple $(X,\tau,d_u)$ is a \textbf{topometric space} if $X$ is a set, $\tau$ is a topology on $X$ and $d_u$ is a distance function such that:
\begin{enumerate} 
\item The topology induced by $d_u$ refines $\tau$.
\item The metric $d_u$ is lower semi-continuous with respect to $\tau$, i.e., the set $\{(x,y)\in X^2: d_u(x,y)\leq r\}$ is $\tau$-closed for all $r\geq 0$.
\end{enumerate}
We say that a triple $(G,\tau,d_u)$ is a \textbf{topometric group} if $(G,\tau)$ is a topological group,
$(G,\tau,d_u)$ is a topometric space and $d_u$ is bi-invariant under group multiplication.

In the setting above, whenever $(X,\tau)$ is a Polish space, we say that $(X,\tau,d_u)$ is a \textbf{Polish topometric space} and if $(G,\tau)$ is a Polish group, we say that $(G,\tau,d_u)$ is a \textbf{Polish topometric group}.
\end{defi}

\begin{defi}
Let $(X,\tau,d_u)$ be a Polish topometric space and let $d$ be a complete metric inducing the topology $\tau$ such that there is a constant $C$ with $d(x,y)\leq C d_u(x,y)$. Then we say that $d_u$ is a distance that \textbf{$C$-uniformly refines} the metric space $(X,d)$.
\end{defi}

Note that this definition is equivalent to the identity function $i:(X,d_u)\to (X,d)$ being Lipschitz. Abusing notation, when we have a fixed metric $d$ for $\tau$ we will write $(X,d,d_u)$ instead of $(X,\tau,d_u)$.
Also note that if $(X,d,d_u)$ is a topometric space, so is $(X,d',d_u')$, where
$d'=\min\{d,1\}$ and $d_u'=\min\{d_u,1\}$ and the topologies do not change.  Thus we may always assume that both metrics are bounded.

\begin{exam}
Let $(X,\tau)$ be Polish, let $d_X$ be a complete metric on $X$ that induces the topology. Then 
$(X,\tau,d_X)$ is a topometric Polish space.
\end{exam}

\begin{exam}\label{du}
Let $(X,\tau)$ be Polish, let $d$ be a complete metric on $X$ that induces the topology and assume that 
$diam_{d}(X)\leq 1$. Let $\{b_m\}_m$ be a dense subset of $X$. Let $G=Isom(X)$ be the group of isometries of $X$ and for $g,h\in G$ let
$d_p(f,h)=\sum_m \frac{1}{2^{m+1}} d(f(b_m),h(b_m))$. Then $d_p$ induces the pointwise convergence topology on $G$ which is Polish.
Let $d_u(f,h)=\sup_{x\in X} d(f(x),h(x))$ which is a metric for the uniform convergence topology. Then $d_u$ $1$-uniformly refines $d_p$,  and $(G,d_p,d_u)$ is a topometric space. Note that $d_u$ is bi-invariant under multiplication by $G$, $d_u$ is lower semicontinuous with respect to $d_p$  and so $(G,d_p,d_u)$ is a Polish topometric group.
\end{exam}

%Let $(X,\tau_X)$ be Polish, let $d_X$ be a compatible complete metric on $X$ defining the topology
%and assume that $Diam(X)=1$. Let $G=Isom(X)$, which is Polish with the pointwise convergence topology which we %will denote by $\tau$. 

\begin{lem}
Assume that $(X,\tau)$ is a Polish space. Let $f,g\in L^0([0,1],X)$, and $d:X\times X\to \mathbb{R}$ a measurable function. Then the function sending $\omega$ to $d(f(\omega),h(\omega))$ is measurable. Morever, if $d$ is bounded, it is integrable in $[0,1]$. 
\end{lem}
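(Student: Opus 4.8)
The plan is to factor the function $\omega \mapsto d(f(\omega),g(\omega))$ as the composition of the coordinate map $\Phi\colon [0,1]\to X\times X$, $\omega\mapsto (f(\omega),g(\omega))$, with the given function $d\colon X\times X\to\mathbb{R}$, and then invoke that a composition of measurable maps is measurable. The only point that genuinely needs care is identifying a single $\sigma$-algebra on $X\times X$ for which both $\Phi$ is measurable and $d$ is assumed measurable. Since $X$ is Polish, hence second countable, the product topology on $X\times X$ is second countable as well, and its Borel $\sigma$-algebra coincides with the product $\sigma$-algebra $\mathcal{B}(X)\otimes\mathcal{B}(X)$; I would recall the short argument, namely that a countable base for $X\times X$ is given by products of basic open sets of the factors, so every open set of $X\times X$ lies in $\mathcal{B}(X)\otimes\mathcal{B}(X)$, while the reverse inclusion is immediate.

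First I would check that $\Phi$ is Borel measurable. For a rectangle $A\times B$ with $A,B\in\mathcal{B}(X)$ one has $\Phi^{-1}(A\times B)=f^{-1}(A)\cap g^{-1}(B)$, which is measurable because $f$ and $g$ are measurable as elements of $L^0([0,1],X)$; since such rectangles generate $\mathcal{B}(X)\otimes\mathcal{B}(X)=\mathcal{B}(X\times X)$, the map $\Phi$ is measurable. Then for any Borel set $C\subseteq\mathbb{R}$ we get $(d\circ\Phi)^{-1}(C)=\Phi^{-1}\bigl(d^{-1}(C)\bigr)$, which is measurable because $d^{-1}(C)\in\mathcal{B}(X\times X)$ by hypothesis and $\Phi$ is measurable. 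Hence $\omega\mapsto d(f(\omega),g(\omega))$ is measurable.

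For the \emph{moreover} part, if $|d|\le M$ on $X\times X$ then $|d(f(\omega),g(\omega))|\le M$ for every $\omega\in[0,1]$; a bounded measurable function on the finite measure space $[0,1]$ is integrable, so $\int_0^1 d(f(\omega),g(\omega))\,d\omega$ exists and is finite, which gives the claim.

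The main (and essentially the only) obstacle is the measure-theoretic fact that the Borel $\sigma$-algebra of $X\times X$ equals the product of the Borel $\sigma$-algebras, since this is what lets $d$ and $\Phi$ interact; everything else is the routine ``composition of measurable maps is measurable'' bookkeeping together with the trivial boundedness estimate. I would also note in passing that if the representatives of elements of $L^0([0,1],X)$ are taken to be Lebesgue measurable rather than Borel measurable, the argument goes through verbatim with $\mathcal{B}([0,1])$ replaced by the Lebesgue $\sigma$-algebra on $[0,1]$.
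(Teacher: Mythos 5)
Your proof is correct and follows essentially the same route as the paper: both reduce the claim to the measurability of the pair map $\omega\mapsto(f(\omega),h(\omega))$ into $X\times X$ and pull back measurable subsets of $X\times X$ (the paper uses the level sets $F_r=\{(x,y):d(x,y)<r\}$, you use general Borel sets $C\subseteq\mathbb{R}$), then conclude integrability from boundedness on the finite measure space $[0,1]$. Your write-up is simply more explicit about the point the paper leaves implicit, namely that second countability of the Polish space $X$ gives $\mathcal{B}(X\times X)=\mathcal{B}(X)\otimes\mathcal{B}(X)$, which is exactly what makes the pullback argument legitimate.
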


\begin{proof}
Since $d$ is measurable, for any $r>0$, $F_r=\{(x,y)\in X\times X: d(x,y)<r\}$ is measurable.
But since $f,h$ are measurable, $\{\omega\in [0,1]:d(f(\omega),h(\omega))\leq r\}=\{\omega\in [0,1]:(f(\omega),h(\omega))\in F_r\}=(f,h)^{-1}(F_r)$ is measurable and thus $d(f(\omega),h(\omega))$ is a measurable function on $\omega$. Since $d$ is bounded and measurable, then it is integrable and the integral is finite (see \cite{WZ}). 
\end{proof}

Note that lower semicontinuous metrics are measurable. 

Throughout this paper, we will work out with several induced distances. We will now  standarize the notation.

\begin{nota}
\begin{enumerate}
	\item Let $(X,d)$ be a metric space. If $G=\mbox{Isom}(X,d)$ is the group of isometries, we will usually denote by $\tau$ the pointwise-convergence topology on $G$. However, if there are several topologies that need to be considered, we will write $\tau_p$ for the topology of pointwise convergence.

 \item We will denote by $d_u$ the uniform convergence distance on $\mbox{Isom}(X,d)$ as introduced in Example \ref{du}.
 \item Let $d$ be a bounded metric on $X$ and for $f,h\in L^0([0,1],X)$ let $\hat{d}(f,h)=\int_0^1 d(f(\omega),h(\omega)) d\omega$. Note that by the argument above $d(f(\cdot),g(\cdot))$ is integrable in $[0,1]$, thus $\hat d$ is well defined.
%Let $\delta:X\times X \to \mathbb{R}$ be a bounded measurable function and for $f,h\in L^0([0,1],X)$ let $\hat \delta(f,h)=\int_0^1 \delta(f(\omega),h(\omega)) d\omega$. Note that by the argument above $\delta$ is integrable in $[0,1]$, thus $\hat \delta$ is well defined.
\item Let $(X,d)$ be a metric space, then whenever $a,b\in X$ and $r>0$, $B^d_r(a)=\{x\in X: d(a,x)<r\}$
and  $B^d_r(a,b)=\{(x,y)\in X\times X: d(a,x)<r, d(b,y)<r\}$.
\end{enumerate}
\end{nota}

\begin{Pro}  Assume that $(X,d,d_u)$ is a topometric space, such that $d_u$ $C$-uniformly refines $d$. Assume that $d$ and $d_u$ are bounded by $1$. Then the triple $(L^0([0,1],X),\hat d,\hat d_u)$ is a topometric space and $\hat d_u$ $C$-uniformly refines  $\hat d$.
\end{Pro}

\begin{proof}
First we show that $\hat d_u$ refines the topology induced by $\hat d$. Indeed $\hat d(f,h)=\int_0^1 d(f(\omega),h(\omega)) d\omega \leq \int_0^1 Cd_u(f(\omega),h(\omega)) d\omega\leq C\hat d_u(f,h)$. This shows that $\hat d_u$ $C$-uniformly refines the metric $\hat d$.

 In order to prove that $\hat d_u$ is lower-semicontinuous,  it suffices to prove that whenever $r>0$, the set $V=\{(f,h)\in L^0\times L^0: \hat d_u(f,h)>r\}$ is open with respect to $\hat d$. Let $(f,h)\in V$.
Then $\hat d_u(f,h)>r+\epsilon$ for some $\epsilon>0$.

For each $m\geq 1$ and $0\leq i< m$, let  $A_{i/m,(i+1)/m}=\{\omega\in [0,1]:
i/m< d_u(f(\omega),h(\omega))\leq (i+1)/m\}$.
Since $d_u$ is lower semicontinuous,
the sets $A_{i/m,(i+1)/m}$ are measurable. By the monotone convergence Theorem, 
so by choosing $m$ large enough, we have 

\begin{align}
\sum_{i>0} \frac{i}{m}\mu(A_{i/m,(i+1)/m})>r+\epsilon.
\end{align}

Since $d_u$ is lower semicontinuous, for each $\omega \in A_{i/m,(i+1)/m}$ there is 
$\delta_\omega>0$ such that for all $1/k\leq \delta_\omega$, if $(x_1,x_2)\in B^{d}_{1/k}(f(\omega),h(\omega))$, then $d_u(x_1,x_2)>i/m$.

For each $k>0$, let 
\[\begin{array}{rl}
	A_{i/m,(i+1)/m,k}=&\{\omega \in A_{i/m,(i+1)/m}: \mbox{if } (x_1,x_2)\in B^{d}_{1/k}(f(\omega),h(\omega)),\\
	 & \mbox{ then } d_u(x_1,x_2)>i/m\}.
\end{array}\]%A_{i/m,(i+1)/m,k}=\{\omega \in A_{i/m,(i+1)/m}: \mbox{if } (x_1,x_2)\in B^{d}_{1/k}(f(\omega),h(\omega)), \mbox{ then } d_u(x_1,x_2)>i/m\}.\]

Then there is $N>0$ such that $\mu(A_{i/m,(i+1)/m,N})\geq \mu(A_{i/m,(i+1)/m})-\frac{\epsilon}{4m}$
for all $i< m$.
Let $C=(\cup_{1\leq i<m} A_{i/m,(i+1)/m}) \setminus \cup_{1\leq i<m} A_{i/m,(i+1)/m,N}$, then 
$\mu(C)<\epsilon/4$.

\textbf{Claim} The ${\hat d}$-open ball of center $(f,h)$ and radius ${\epsilon/(4N)}$ is contained in $V$.

To check this, let $(f',h')\in B^{\hat d}_{\epsilon/(4N)}{(f,h)}$ and let 
\[B=\{\omega\in [0,1]:d(f(\omega),f'(\omega))\geq 1/N \mbox{ or } d(h(\omega),h'(\omega))\geq 1/N\}.\] Then 
$\mu(B) < \epsilon/4$. Now let $A=[0,1]\setminus (B\cup C)$, then 
$\mu (A^c)\leq \mu(B)+\mu(C)\leq \epsilon/2$. 

Let $\omega\in A$ and assume that $d(f(\omega),h(\omega))>0$. Then there is some $i$ such that  $\omega \in A_{i/m,(i+1)/m}$. Then \[d(f'(\omega),f(\omega))<1/N,\] \[d(h'(\omega),h(\omega))<1/N,\] and 
\[\omega\in A_{i/m,(i+1)/m,N},\] so $d(f'(\omega),h'(\omega))>i/m$.

From this and equation (1) we get
\[\begin{split}
	&\hat d_u(f',h')=\int d_u(f'(\omega),h'(\omega)) d\omega\geq 
\int_A d_u(f'(\omega),h'(\omega)) d\omega\geq \\
& \sum_{i>0} \frac{i}{m}\mu(A\cap A_{i/m,(i+1)/m,N})
\geq [\sum_{i>0} \frac{i}{m}\mu(A_{i/m,(i+1)/m})]-\mu(A^c)\geq r+\epsilon/2
\end{split}\]
%
%\[\hat d_u(f',h')=\int d_u(f'(\omega),g'(\omega)) d\omega\geq 
%\int_A d_u(f'(\omega),g'(\omega)) d\omega\geq \\ \sum_{i>0} \frac{i}{m}\mu(A\cap A_{i/m,(i+1)/m1,1/N})
%\geq [\sum_{i>0} \frac{i}{m}\mu(A_{i/m,(i+1)/m})]-\mu(A^c)\geq r+\epsilon/2\]. 
So $\hat d_u(f',g')>r$ and $V$ is $\hat d$-open as we wanted.
\end{proof}

\begin{Cor} Let $(G,d,d_u)$ be a Polish topometric group and assume $\hat d_u$ $C$-uniformly refines  $\hat d$. Also assume that $d$ and $d_u$ are bounded by one. Then $(L^0([0,1],G),\hat d,\hat d_u)$ is a Polish topometric group.
\end{Cor}

\begin{proof}
We just proved that $(L^0([0,1],G),\hat d,\hat d_u)$ is a topometric space. Since $d_u$ is bi-invariant it is easy to check that $\hat d_u$ is also bi-invariant. Finally since $(G,d)$ is Polish, then $L^0([0,1],G)$
is also Polish, see \cite{Ke, Ib}.
\end{proof}

We recall the following definition from \cite{BBM}.

\begin{defi}
Let $(G,\tau,d_u)$ be a Polish topometric group.
\begin{enumerate}
	\item We say that $(G,\tau,d_u)$ has \textbf{metric generics} if there is $x\in G$ such that $\overline{Orb(x)}^{d_u}$ is comeager, where the closure is taken with respect to the metric $d_u$ and the orbit with respect to the conjugacy action. We call such an $x$ a \textbf{metric generic element}.
  \item We say that $(G,\tau,d_u)$ has \textbf{ample metric generics} if for every $n$ there is $x\in G^n$ such that $\overline{Orb(x)}^{d_u}$ is comeager,
where the closure is taken with respect to the metric $d_u$ and the orbit with respect to the diagonal conjugacy action. We call such an $x$ a \textbf{metric generic $n$-tuple}.
\end{enumerate}
\end{defi}

\begin{thm}\label{transfmetL} Let $(G,d,d_u)$ be a Polish topometric group with metric generics and assume that $d$, $d_u$ are bounded by one. Then $(L^0([0,1],G),\hat d,\hat d_u)$ has metric generics. Furthermore, if $g\in G$ is a metric generic in $(G,d,d_u)$, then
$C_g$ is a metric generic in $(L^0([0,1],G),\hat d,\hat d_u)$, where $C_g(\omega)=g$ for all $\omega\in [0,1]$.
\end{thm}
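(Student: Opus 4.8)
The plan is to transfer the metric genericity of $g\in G$ to the constant function $C_g\in L^0([0,1],G)$ by showing that the $\hat d_u$-closure of the conjugacy orbit of $C_g$ is comeager in $(L^0([0,1],G),\hat d)$. The key observation to exploit is that conjugating $C_g$ by an arbitrary $h\in L^0([0,1],G)$ produces the function $\omega\mapsto h(\omega)g h(\omega)^{-1}$, i.e. a function that pointwise lands in $Orb(g)\subseteq G$; and conversely any simple (finitely-valued) function whose values lie in $Orb(g)$ arises this way. So the orbit of $C_g$ is exactly $\{F\in L^0([0,1],G): F(\omega)\in Orb(g)\text{ for a.e. }\omega\}$ up to the simple-function issue, and its $\hat d_u$-closure should be $\{F: F(\omega)\in\overline{Orb(g)}^{d_u}\text{ for a.e. }\omega\}$.

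First I would set $D=\overline{Orb(g)}^{d_u}\subseteq G$, which is comeager in $(G,d)$ by hypothesis, and define $\mathcal D=\{F\in L^0([0,1],G): F(\omega)\in D\text{ for a.e. }\omega\}$. Step one: show $\mathcal D$ is $\hat d$-comeager in $L^0([0,1],G)$. This is a Fubini-type argument: the complement $D^c$ is meager, write it as a countable union of nowhere dense closed sets $N_k$, and show that each $\{F: \mu(F^{-1}(N_k))>0\}$ is meager in $L^0$ (a generic measurable function avoids, off a null set, any fixed meager target, since the pushforward-of-Lebesgue map is "generic"). More concretely, one can use that simple functions with values in a fixed countable dense subset of $D$ are $\hat d$-dense and that $\mathcal D$ contains a dense $G_\delta$; alternatively quote the analogous density/comeagerness computation from \cite{KL}, which is the standard tool. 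Step two: show $\mathcal D\subseteq \overline{Orb(C_g)}^{\hat d_u}$. Given $F\in\mathcal D$, approximate $F$ in $\hat d_u$ by a simple function $\tilde F=\sum_i \mathbf 1_{A_i} x_i$ with $x_i\in D$, then further approximate each $x_i$ in $d_u$ by some $g^{h_i}\in Orb(g)$; piecing the $h_i$ together over the partition $\{A_i\}$ gives $h\in L^0([0,1],G)$ with $C_g^{\,h}=\sum_i\mathbf 1_{A_i} g^{h_i}$, and $\hat d_u(C_g^{\,h},\tilde F)=\int \sup$-distance $\le \max_i d_u(g^{h_i},x_i)$ is small. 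Combining gives $F\in\overline{Orb(C_g)}^{\hat d_u}$. Together with step one this yields that $\overline{Orb(C_g)}^{\hat d_u}\supseteq\mathcal D$ is $\hat d$-comeager, hence $C_g$ is a metric generic.

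The main obstacle I expect is the measurability/selection bookkeeping in the simple-function approximations: one must ensure that after choosing, for each value $x_i$ of the simple function, an element $h_i\in G$ witnessing $d_u(g^{h_i},x_i)<\varepsilon$, the resulting $h=\sum_i \mathbf 1_{A_i}h_i$ is genuinely in $L^0([0,1],G)$ and that conjugation by it acts as claimed — this is routine because the partition is finite, but it needs the earlier characterization that $\hat d_u$ is controlled by the essential supremum of pointwise $d_u$-distances. A secondary subtlety is step one: the cleanest route is probably to invoke the genericity machinery of \cite{KL} verbatim, since showing directly that $\{F:\mu(F^{-1}(N))>0\}$ is $\hat d$-meager for nowhere dense $N$ requires the observation that $\hat d$-balls around a simple function contain functions whose pushforward charges any fixed open set, so they cannot be trapped inside $\{F: F(\omega)\in N\text{ a.e.}\}$; I would state this as a lemma and reduce to \cite{KL}. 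Finally, one should remark that $d_u$ and $\hat d_u$ being bounded by one guarantees all integrals are finite, so no integrability issue arises.
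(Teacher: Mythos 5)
Your overall strategy coincides with the paper's: set $D=\overline{Orb(g)}^{d_u}$, show $\mathcal D=\{F\in L^0([0,1],G): F(\omega)\in D \text{ a.e.}\}$ is $\hat d$-comeager (both you and the paper quote \cite{KL} for this), and then show $\mathcal D\subseteq\overline{Orb(C_g)}^{\hat d_u}$. The gap is in your step two: you propose to approximate an arbitrary $F\in\mathcal D$ \emph{in the metric $\hat d_u$} by a simple function with values in $D$. That approximation is not available in general. Simple functions are $\hat d$-dense in $L^0([0,1],G)$, but not $\hat d_u$-dense, because $(G,d_u)$ need not be separable while $F$ is only measurable for the Polish topology $\tau$. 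Concretely, for $G=S_\infty$ the uniform metric is discrete, so $\hat d_u(F,\tilde F)=\mu\{\omega: F(\omega)\neq\tilde F(\omega)\}$; a generic $g$ has uncountable orbit $D=Orb(g)=\overline{Orb(g)}^{d_u}$, and any Borel injection $F:[0,1]\to D$ with atomless distribution satisfies $\hat d_u(F,\tilde F)=1$ for \emph{every} simple $\tilde F$, although $F\in\mathcal D$ and must be handled. Since $d\leq C\,d_u$ and not conversely, you cannot substitute an $\hat d$-approximation by simple functions at this stage: that would only place $F$ in the $\hat d$-closure of the orbit, not the $\hat d_u$-closure that metric genericity requires.

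The paper closes exactly this hole by skipping the simple-function reduction: for a.e.\ $\omega$ there is $h_\omega\in G$ with $d_u(F(\omega),h_\omega^{-1}gh_\omega)\leq\epsilon$; lower semicontinuity of $d_u$ makes the relation $\{(\omega,h): d_u(F(\omega),h^{-1}gh)\leq\epsilon\}$ Borel, and Jankov--von Neumann uniformization yields a measurable $h\in L^0([0,1],G)$ realizing the inequality pointwise a.e., whence $\hat d_u(F,h^{-1}C_g h)\leq\epsilon$ directly. Your piecewise-constant gluing $h=\sum_i \mathbf 1_{A_i}h_i$ and the estimate $\hat d_u(C_g^{\,h},\tilde F)\leq\max_i d_u(g^{h_i},x_i)$ are correct as far as they go, but they only show that \emph{simple} $D$-valued functions lie in $\overline{Orb(C_g)}^{\hat d_u}$, which does not exhaust $\mathcal D$; the measurable-selection step (not the finite-partition bookkeeping you flag as routine) is the real content of the argument.
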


\begin{proof}
Since $(G,d,d_u)$ has metric generics, there is $g\in G$ such that $S=\overline{Orb(g)}^{d_u}$ is comeager in $G$ (here we take the closure with respect to $d_u$).  Then by \cite{KL} $Orb(C_g)$ is dense.  Let $F=\{f\in L^0([0,1],G): f\in S \ a.e.\}$. Since $S$ 
contains a dense $G_\delta$ set in $G$, by \cite{KL} $F$ contains a dense $G_\delta$ set in $L^0([0,1],G)$.
It remains to show that $F\subseteq \overline{Orb(C_g)}^{\hat d_u}$ (here we take the closure with respect to $\hat d_u$).

Let $f\in F$ and let $\epsilon>0$. Since for a.e. $\omega\in [0,1]$, $f(\omega)\in S$, there is $h_\omega\in G$ such that $d_u(f(\omega),h_{\omega}^{-1}gh_{\omega})\leq \epsilon$. Since 
$d_u$ is lower-semicontinuous, this is a Borel condition. By Jankov-von Neumann we can find $h \in L^0([0,1],G)$ such that
$d_u(f(\omega),h(\omega)^{-1}gh(\omega))\leq \epsilon$ for a.e. $\omega\in [0,1]$ and thus
$\hat d_u(f,h^{-1}gh)\leq \epsilon$. This shows that $f\in \overline{Orb(C_g)}^{\hat d_u}$ as we wanted.
\end{proof}

\begin{thm}  Let $(G,d,d_u)$ be a Polish topometric group with topometric ample generics, then $(L^0([0,1],G),\hat d,\hat d_u)$ has topometric ample generics.
\end{thm}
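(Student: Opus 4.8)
The plan is to mimic the proof of Theorem~\ref{transfmetL}, now for $n$-tuples and the diagonal conjugacy action, treating $G^n$ (equipped with the product uniform metric, still written $d_u$, which is bi-Lipschitz equivalent to $\bar x,\bar y\mapsto\max_i d_u(x_i,y_i)$) as the base group. Fix $n\ge 1$ and let $\bar g=(g_1,\dots,g_n)\in G^n$ be a metric generic $n$-tuple, so that $S:=\overline{Orb(\bar g)}^{d_u}$ is comeager in $G^n$. Since $d_u$ refines $\tau$ on $G^n$, the set $S$ is also $\tau$-dense, hence $Orb(\bar g)$ is $\tau$-dense in $G^n$. The candidate metric generic $n$-tuple for $L^0([0,1],G)$ is the constant tuple $C_{\bar g}:=(C_{g_1},\dots,C_{g_n})$; showing that it works for every $n$ proves the theorem.

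Next I would identify $L^0([0,1],G)^n$ with $L^0([0,1],G^n)$ as topometric groups --- the product uniform metric on the former corresponding to $\widehat{d_u^{(n)}}$ on the latter up to bi-Lipschitz equivalence --- so that $C_{\bar g}$ becomes the constant function $\omega\mapsto\bar g$. Applying the transfer results of Ka\"ichouh and Le Ma\^itre \cite{KL} to the Polish group $G^n$, the $\tau$-density of $Orb(\bar g)$ yields density of $Orb(C_{\bar g})$, and the fact that $S$ contains a dense $G_\delta$ subset of $G^n$ yields that $F:=\{\bar f\in L^0([0,1],G^n):\bar f(\omega)\in S\text{ for a.e. }\omega\}$ contains a dense $G_\delta$ subset of $L^0([0,1],G)^n$. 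The problem then reduces, exactly as in Theorem~\ref{transfmetL}, to proving $F\subseteq\overline{Orb(C_{\bar g})}^{\hat d_u}$, the closure being taken in the product uniform metric.

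For this inclusion, fix $\bar f\in F$ and $\epsilon>0$. For a.e.\ $\omega$ we have $\bar f(\omega)\in S=\overline{Orb(\bar g)}^{d_u}$, so some $h\in G$ satisfies $\max_{1\le i\le n}d_u(f_i(\omega),h^{-1}g_ih)\le\epsilon$; the set of such pairs $(\omega,h)$ is Borel, since each $d_u(f_i(\cdot),\cdot)$ is lower semicontinuous (hence Borel) and multiplication in $G$ is continuous, and it projects onto a co-null subset of $[0,1]$. Jankov--von Neumann then provides $h\in L^0([0,1],G)$ with $\max_i d_u(f_i(\omega),h(\omega)^{-1}g_ih(\omega))\le\epsilon$ for a.e.\ $\omega$, so that $\hat d_u(f_i,h^{-1}C_{g_i}h)=\int_0^1 d_u(f_i(\omega),h(\omega)^{-1}g_ih(\omega))\,d\omega\le\epsilon$ for each $i$; hence $\bar f$ is within $\epsilon$ of $Orb(C_{\bar g})$ in the product uniform metric. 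As $\epsilon$ is arbitrary, $\bar f\in\overline{Orb(C_{\bar g})}^{\hat d_u}$, so $F\subseteq\overline{Orb(C_{\bar g})}^{\hat d_u}$ is comeager and $C_{\bar g}$ is a metric generic $n$-tuple.

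The two points I expect to need genuine care are: verifying that the product uniform metric on $L^0([0,1],G)^n$ and $\widehat{d_u^{(n)}}$ on $L^0([0,1],G^n)$ are bi-Lipschitz equivalent (a routine $\max$-versus-sum estimate, needed precisely so that ``comeager'' and the closure operations agree under the identification), and checking Borelness of the $\epsilon$-approximation relation so that Jankov--von Neumann applies --- this is the same lower-semicontinuity argument as in Theorem~\ref{transfmetL}, now with a finite maximum over the $n$ coordinates, which preserves Borelness. The substantive input --- transfer of density and of dense $G_\delta$ subsets under $H\mapsto L^0([0,1],H)$ --- is taken directly from \cite{KL}, so no new difficulty arises there.
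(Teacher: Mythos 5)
Your proposal is correct and follows essentially the same route as the paper, whose own proof of this theorem simply says ``as in the previous proof'': take the constant tuple $C_{\vec g}$ and repeat the argument of Theorem~\ref{transfmetL} for the diagonal conjugacy action, using the transfer results of \cite{KL} plus Jankov--von Neumann uniformization. Your write-up merely makes explicit the details the paper leaves implicit (the identification of $L^0([0,1],G)^n$ with $L^0([0,1],G^n)$, the bi-Lipschitz comparison of the tuple metrics, and the Borelness of the $\epsilon$-approximation relation), all of which check out.
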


\begin{proof}
Let $n\geq 1$ and let $\vec g=(g_1,\dots,g_n)\in G^n$ be such that $S=\overline{Orb(\vec g)}^{d_u}$ is comeager. Consider $C_{\vec g}=(C_{g_1},\dots,C_{g_n})$ the constant function with value $(g_1,\dots,g_n)$. As in the previous proof, it is easy to prove that $\overline{Orb(C_{\vec g})}^{\hat d_u}$ is comeager in $(L^0([0,1],G),\hat d)$.
\end{proof}

\section{Topologies of the group $\mbox{Isom}(L^0([0,1],X)$)}\label{uct}

Let $X$ be a Polish space. Fix $d$ a metric bounded by $1$ that generates the topology and such that $X$ is complete with respect to $d$. Let $G=\mbox{Isom}(X)$, the group of isometries of $(X,d)$. The topology of pointwise convergence on $G$ is Polish. 

For $\alpha,\beta \in L^0([0,1],X)$ let $\hat d(\alpha,\beta)=\int d(\alpha(\omega),\beta(\omega)) d\omega$. This generates a Polish topology on $L^0([0,1],X)$. In \cite{Ib} Ibarluc\'ia studies the group of isometries of $(L^0([0,1],X),\hat d)$.
In particular he characterizes the group of isometries as $\tilde{G}=L^0([0,1],G)\rtimes \Aut$, where the action is given as follows: for $(f,T)\in L^0([0,1],G)\rtimes \Aut$ and $\alpha\in L^0([0,1],X)$, then $((f,T)(\alpha))(\omega)=f(\omega)(\alpha(T^{-1}(\omega)))$. Note that since $G$ is Polish, so is $L^0([0,1],G)$ and that $\Aut$ is Polish with the topology of weak convergence.

Since $\tilde{G}$ is a group of isometries of a Polish metric space, we can also endow it naturally with two topologies:

\begin{defi}
For $(f,T) \in \tilde{G}$; $\alpha_1,\dots,\alpha_n \in L^0([0,1],X)$ and $\epsilon>0$, let
$$V((f,T))=\{(g,S)\in \tilde{G}: \hat d((f,T)(\alpha_i),(g,R)(\alpha_i))<\epsilon, i\leq n\}.$$
We call the topology generated by these sets, where $\epsilon$ varies over the positive numbers,
$\alpha_1,\dots,\alpha_n$ belong to $L^0([0,1],X)$
and $n$ ranges over the positive natural numbers the \textbf{pointwise convergence topology}. It is Polish, see example to find a complete metric that generates the topology.
\end{defi}

\begin{defi}\label{defnunifconv}
For $(f,T), (g,S)\in \tilde{G}$, the uniform distance is given by:

$$L_u((f,T), (g,S))=\sup_{\alpha\in L^0([0,1],X)}\hat d((f,T)(\alpha),(g,S)(\alpha)).$$
Since the collection of simple functions is dense in $L^0([0,1],\mathbb{N})$ we can also take the supremum above over simple functions. This is the topology of \textbf{uniform convergence} as explained in example \ref{du}.
\end{defi}

The goal of this section is to study and characterize the two topologies in $\tilde{G}$
in terms of topologies of $L^0([0,1],G)$ and $\Aut$.

\subsection{Pointwise convergence topology}

As we said earlier, the spaces $L^0([0,1],G)$, $\Aut$ are both Polish, so the group $\tilde{G}$, being the semidirect product of these two groups, also carries the product topology which is Polish. In \cite{Ib} Ibarluc\'ia shows that the product topology coincides with the topology of pointwise convergence. In this section we will prove again this fact, our 
proof is very soft, we will prove by double containment that the topologies agree. We assume that $X$ is 
not trivial, that is, $|X|\geq 2$. We will write $Id$ for the map from $[0,1]$ to $[0,1]$ defined by 
$Id(\omega)=\omega$.

First, we show that the product topology is finer than the pointwise convergence topology.

\begin{lem}
Let $(f,T)\in \tilde G$, let $\alpha\in L^0([0,1],X)$ be a simple function and $\epsilon>0$. Consider
the subbasic open set \[V=\{(g,R)\in \tilde G: \int d(f(\omega)\alpha(T^{-1}(\omega)),g(\omega)\alpha(R^{-1}(\omega))) d\omega<\epsilon\}\] in the topology of pointwise convergence. 
Then there are open sets $U\subseteq L^0([0,1],G)$, $W\subseteq \Aut$ such that $(f,T)\in U\times W\subseteq V$.
\end{lem}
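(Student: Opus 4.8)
The plan is to estimate the integrand defining $V$ by the triangle inequality, inserting the intermediate point $g(\omega)\alpha(T^{-1}(\omega))$, so that the effect of changing $f$ and the effect of changing $T$ are controlled separately --- the former by an open condition on $L^0([0,1],G)$, the latter by an open condition on $\Aut$. For $\omega\in[0,1]$ set $A(\omega)=d\bigl(f(\omega)\alpha(T^{-1}(\omega)),\,g(\omega)\alpha(T^{-1}(\omega))\bigr)$ and $B(\omega)=d\bigl(g(\omega)\alpha(T^{-1}(\omega)),\,g(\omega)\alpha(R^{-1}(\omega))\bigr)$. Then the integrand is $\le A(\omega)+B(\omega)$, and since $g(\omega)\in G=\mathrm{Isom}(X)$ is an isometry we have $B(\omega)=d\bigl(\alpha(T^{-1}(\omega)),\,\alpha(R^{-1}(\omega))\bigr)$. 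Hence it suffices to find an open $U\ni f$ in $L^0([0,1],G)$ with $\int_0^1 A(\omega)\,d\omega<\epsilon/2$ for every $g\in U$ and an open $W\ni T$ in $\Aut$ with $\int_0^1 B(\omega)\,d\omega<\epsilon/2$ for every $R\in W$; integrating the triangle inequality then gives $(f,T)\in U\times W\subseteq V$.

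For the $\Aut$-part, write $\alpha=\sum_{i\le k}x_i\mathbf 1_{A_i}$ with $\{A_i\}_{i\le k}$ a measurable partition of $[0,1]$ and the $x_i$ distinct. Then $\alpha\circ T^{-1}$ takes the value $x_i$ precisely on $T(A_i)$ and $\alpha\circ R^{-1}$ takes the value $x_i$ precisely on $R(A_i)$, so these two simple functions disagree only on $\bigcup_{i\le k}\bigl(T(A_i)\triangle R(A_i)\bigr)$; as $d\le 1$ this yields $\int_0^1 B(\omega)\,d\omega\le\sum_{i\le k}\mu\bigl(T(A_i)\triangle R(A_i)\bigr)$. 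Therefore $W:=\{R\in\Aut:\ \mu(T(A_i)\triangle R(A_i))<\epsilon/(2k)\text{ for all }i\le k\}$ is an open neighbourhood of $T$ in the weak topology of $\Aut$ with the required property.

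For the $L^0([0,1],G)$-part, put $\beta:=\alpha\circ T^{-1}$, again a simple function in $L^0([0,1],X)$, and let $\Phi(g)=\int_0^1 A(\omega)\,d\omega=\hat d(f\cdot\beta,\,g\cdot\beta)$, where $\cdot$ denotes the pointwise action of $L^0([0,1],G)$ on $L^0([0,1],X)$. I would take $U:=\{g\in L^0([0,1],G):\Phi(g)<\epsilon/2\}$, so the task reduces to continuity of $\Phi$. Writing $\beta=\sum_{i\le k}x_i\mathbf 1_{T(A_i)}$, on each $T(A_i)$ the map $g\mapsto g\cdot\beta$ coincides with $g\mapsto\bigl(\omega\mapsto g(\omega)(x_i)\bigr)$, i.e.\ with postcomposition of $g$ by the $\tau_p$-continuous evaluation $\mathrm{ev}_{x_i}\colon G\to X$; postcomposition by a continuous map and gluing of finitely many fixed measurable pieces both preserve convergence in measure, so $g\mapsto g\cdot\beta$ is continuous from $L^0([0,1],G)$ to $L^0([0,1],X)$, whence $\Phi$, as its composition with the continuous map $\hat d(f\cdot\beta,\,\cdot\,)$, is continuous. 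Then for $(g,R)\in U\times W$ one gets $\int_0^1 d(f(\omega)\alpha(T^{-1}(\omega)),g(\omega)\alpha(R^{-1}(\omega)))\,d\omega\le\Phi(g)+\int_0^1 B(\omega)\,d\omega<\epsilon$, so $U\times W\subseteq V$ while $(f,T)\in U\times W$ by construction. The only step needing real care is the continuity of $\Phi$: one must check that $g_n\to g$ in measure in $L^0([0,1],G)$ forces $g_n(\omega)(x_i)\to g(\omega)(x_i)$ in measure --- via the standard ``a subsequence of any subsequence converges almost everywhere'' argument together with continuity of $\mathrm{ev}_{x_i}$ --- and then, using the boundedness of $d$, that the corresponding integrals converge; everything else is routine.
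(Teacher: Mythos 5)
Your proof is correct and follows essentially the same route as the paper: the neighbourhood $W$ is identical (symmetric-difference conditions $\mu(T(A_i)\triangle R(A_i))<\epsilon/(2k)$), and your $U$, after decomposing $g\cdot\beta$ over the pieces $T(A_i)$, amounts to the paper's condition of $\hat d$-closeness of $g$ to $f$ on the values of $\alpha$. The only cosmetic differences are that you separate the two effects by a triangle inequality through $g(\omega)\alpha(T^{-1}(\omega))$ instead of the paper's direct estimate over the refinement $T(A_i)\cap R(A_j)$, and that you spell out the openness of $U$ via continuity of evaluation, which the paper leaves implicit.
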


\begin{proof}
We may write $\alpha=\sum_{i=1}^k a_i\chi_{A_i}$ where $A_1,A_2,\dots,A_k$ is a partition of $[0,1]$
and $a_1,\dots,a_k\in X$. 

First let $W:=\{R\in \Aut: \mu(T(A_i)\triangle R(A_i))<\epsilon/(2k),i\leq k\}$ and let
$U:=\{g\in L^0([0,1],G): \int d (f(\omega)(a_i),g(\omega)(a_i))d\omega<\epsilon/(2k):i\leq k\}$.

Then whenever $g\in U$ and $R\in W$ we have that:
%\begin{equation*}
\[\begin{split}
&\int d(f(\omega)\alpha(T^{-1}(\omega)),g(\omega)\alpha(R^{-1}(\omega))) d\omega\\
\leq&\sum_{i,j\leq k} \int_{T(A_i)\cap R(A_j)} d(f(\omega)(a_i)),g(\omega)(a_j)) d\omega\\
\leq&\sum_{i\leq k} \int_{T(A_i)\cap R(A_i)} d(f(\omega)(a_i)),g(\omega)(a_i)) d\omega\\
&\hspace*{0.75 cm}+\sum_{i,j\leq k,i\neq j} \int_{T(A_i)\cap R(A_j)} d(f(\omega)(a_i)),g(\omega)(a_j)) d\omega \\
\leq&\sum_{i\leq k} \epsilon/(2k)+\sum_{i \leq k} \mu (T(A_i)\triangle R(A_i))\leq \epsilon/2+\sum_{i\leq k} \epsilon/(2k) \leq \epsilon.
\end{split}\]
%\end{equation*}

%$\int d_X(f(\omega)\alpha(T^{-1}(\omega)),g(\omega)\alpha(R^{-1}(\omega))) d\omega\leq$\\
%$\sum_{i,j\leq k} \int_{T(A_i)\cap R(A_j)} d_X(f(\omega)(a_i)),g(\omega)(a_j)) d\omega \leq $\\
%$\sum_{i\leq k} \int_{T(A_i)\cap R(A_i)} d_X(f(\omega)(a_i)),g(\omega)(a_i)) d\omega+$\\
%$\sum_{i,j\leq k,i\neq j} \int_{T(A_i)\cap R(A_j)} d_X(f(\omega)(a_i)),g(\omega)(a_j)) d\omega \leq$\\
%$\sum_{i\leq k} \epsilon/(2k)+\sum_{i \leq k} \mu (T(A_i)\triangle R(A_i))\leq \epsilon/2+\sum_{i\leq k} \epsilon/(2k) \leq \epsilon$.

Finally notice that $f\in U$ and $T\in W$.
\end{proof}

The next lemma is the other direction: the pointwise convergence topology is finer than the product topology. 

\begin{lem}
Let $(f,T)\in \tilde G$, let $\alpha\in L^0([0,1],X)$ be simple, $B\subseteq [0,1]$ measurable and $\epsilon>0$. Consider the open sets $U\subseteq L^0([0,1],G)$ defined by $U=\{g\in L^0([0,1],G):
\int d_X (f(\omega) \alpha(\omega),g(\omega) \alpha(\omega)) d\omega<\epsilon\}$ and 
$W\subseteq \Aut$ given by $\{R\in \Aut: \mu(T(B)\triangle R(B))<\epsilon\}$. Then there are
open sets $V_1,V_2$ in $\tilde{G}$ in the topology of pointwise convergence with $(f,T)\in V_1 \subseteq L^0([0,1],X) \times W$ and $(f,T)\in V_2 \subseteq U \times \Aut$.
\end{lem}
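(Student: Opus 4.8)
The plan is to produce each of the two open neighborhoods of $(f,T)$ in the pointwise convergence topology by using a single well-chosen test function, relying on the fact (established in the previous lemma) that the product topology is finer than the pointwise topology, and on the freedom to pick the test function $\alpha$ to ``see'' the data we want.

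For $V_2 \subseteq U \times \Aut$: here we want to recover control of the $L^0([0,1],G)$-coordinate from a pointwise-convergence condition, while imposing no restriction on the $\Aut$-coordinate. The natural idea is to use a test function $\beta$ that is $T$-invariant in the sense that matters, e.g.\ arrange that the composition with $R^{-1}$ does not interfere. Concretely, I would first note that by the previous lemma the set $U' = \{g : \int d_X(f(\omega)\alpha(\omega), g(\omega)\alpha(\omega))\,d\omega < \epsilon\}$, once we translate it back through the action, is already (the $L^0$-part of) a product-open set. The cleanest route: set $\beta = \alpha \circ T$, so that $(f,T)(\beta)(\omega) = f(\omega)\beta(T^{-1}(\omega)) = f(\omega)\alpha(\omega)$, which is independent of the $\Aut$-coordinate in the sense that $(g,R)(\beta)(\omega) = g(\omega)\alpha(R^{-1}T(\omega))$ — unfortunately this still feels the $\Aut$-coordinate. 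So instead I would take $\alpha$ itself, and examine $V_2 = \{(g,R) : \hat d((f,\mathrm{Id})(\alpha),(g,R)(\alpha)) < \epsilon'\}$ only after composing by $(\,\cdot\,,T^{-1})$ on one side; equivalently, work with the neighborhood defined using the already-transformed function and use that the map $(g,R)\mapsto (g,RT^{-1})$ is a homeomorphism of $\tilde G$ in the pointwise topology fixing a translate of $(f,T)$. The key point is that once the $\Aut$-coordinate is neutralized, the condition $\int d_X(f(\omega)\alpha(\omega), g(\omega)\alpha(\omega))\,d\omega < \epsilon$ is \emph{exactly} the definition of $U$, so $V_2$ can be taken equal to that pointwise-open set and it sits inside $U \times \Aut$ with $(f,T) \in V_2$.

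For $V_1 \subseteq L^0([0,1],X) \times W$: now we want a pointwise condition that controls the $\Aut$-coordinate (forcing $\mu(T(B)\triangle R(B)) < \epsilon$) while placing no constraint on $g$. Since $|X| \geq 2$, pick distinct points $a, b \in X$ with $d(a,b) = \delta > 0$ (after rescaling we may assume $\delta$ is as large as the bounded metric allows, or simply keep it as a fixed positive constant) and let $\alpha = a\chi_B + b\chi_{[0,1]\setminus B}$. Then $(f,T)(\alpha)(\omega) = f(\omega)(a)$ on $T(B)$ and $f(\omega)(b)$ off $T(B)$, while $(g,R)(\alpha)(\omega) = g(\omega)(a)$ on $R(B)$ and $g(\omega)(b)$ off $R(B)$. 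On the symmetric difference $T(B)\triangle R(B)$ the two functions feed $g(\omega)$ (respectively $f(\omega)$ on the appropriate piece) the \emph{different} points $a$ and $b$; since $g(\omega)$ is an isometry, $d(g(\omega)(a), g(\omega)(b)) = \delta$ there. The obstacle — and this is the main subtlety — is that the two functions being compared are $(f,T)(\alpha)$ and $(g,R)(\alpha)$, i.e.\ $f(\omega)(\cdot)$ versus $g(\omega)(\cdot)$, so on $T(B)\triangle R(B)$ we are comparing $f(\omega)(a)$ with $g(\omega)(b)$ (or vice versa), and an adversarial $g$ could make these close. The fix is to put a genuine restriction on $g$ via a \emph{second} test function (a constant function, say $C_a$ and $C_b$), which costs nothing toward $W$ but forces $g(\omega)$ to be $\hat d$-close to $f(\omega)$ pointwise; once $g(\omega) \approx f(\omega)$, on $T(B)\triangle R(B)$ we are comparing $f(\omega)(a)$ with (approximately) $f(\omega)(b)$, whose distance is $\delta$, giving $\hat d((f,T)(\alpha),(g,R)(\alpha)) \gtrsim \delta \cdot \mu(T(B)\triangle R(B))$, so a pointwise condition with threshold $\sim \delta\epsilon$ (together with the constant-function condition) forces $R \in W$. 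Thus $V_1$ is the intersection of these two pointwise-open sets, it contains $(f,T)$, and it is contained in $L^0([0,1],X)\times W$ because it imposes nothing of the form restricting $g$ beyond membership in $L^0$ — more precisely its projection to the first coordinate is all of $L^0([0,1],G) \subseteq L^0([0,1],X)$ in the relevant sense, or we simply weaken to $L^0([0,1],X) \times W$ as stated.

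The routine verifications — that the explicit simple functions are measurable, that the integral estimates go through with the stated constants, and that $(f,T)$ lies in each set — I would carry out by direct computation splitting the domain over $T(B) \cap R(B)$, $T(B)\setminus R(B)$, $R(B)\setminus T(B)$ and the complement, exactly as in the preceding lemma. The only real content is the observation in the previous paragraph that controlling $\Aut$ from below requires first pinning the $L^0([0,1],G)$-coordinate with an auxiliary constant-function test, which is legitimate since constant functions impose no condition on the $\Aut$-coordinate; I expect this to be the step a referee would scrutinize.
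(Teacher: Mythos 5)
Your treatment of $V_1$ is essentially the paper's own argument: you test against the two-valued function $a\chi_B+b\chi_{B^c}$ together with a constant test function, and the constant test pins $g(\omega)(a)$ to $f(\omega)(a)$ in the integrated sense, so that on $T(B)\triangle R(B)$ the triangle inequality forces the integrand up to about $d(a,b)$ and hence $\mu(T(B)\triangle R(B))<\epsilon$; that half is correct and is exactly how the paper proceeds (its $\beta_1,\beta_2$).

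The $V_2$ half, however, has a genuine gap. Right translation by $(C_e,T^{-1})$, i.e.\ the homeomorphism $(g,R)\mapsto (g,RT^{-1})$, does not ``neutralize the $\mathrm{Aut}$-coordinate'': after translating, membership of $(g,R)$ in your set is the condition $\int d_X(f(\omega)\alpha(\omega),\,g(\omega)\alpha(TR^{-1}(\omega)))\,d\omega<\epsilon'$, which still couples $g$ with $R$ through $\alpha\circ TR^{-1}$ and is \emph{not} the condition defining $U$. Concretely, take $X=\mathbb{N}$ with the discrete metric, $G=S_\infty$, $f=C_e$, $T=\mathrm{Id}$, and $\alpha$ equal to $0$ on a set $A$ of measure $1/2$ and to $1$ on $A^c$; let $R$ interchange $A$ and $A^c$ and let $g$ be constantly the transposition $(0\,1)$. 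Then $g(\omega)\alpha(R^{-1}(\omega))=\alpha(\omega)$ for every $\omega$, so $(g,R)$ lies in your $V_2$ for every $\epsilon'>0$, while $\int d_X(f(\omega)\alpha(\omega),g(\omega)\alpha(\omega))\,d\omega=1$, so $g\notin U$ whenever $\epsilon<1$; thus your $V_2$ is not contained in $U\times \mathrm{Aut}([0,1])$. The repair is precisely the route you started and then abandoned: keep the test function $\gamma=\alpha\circ T$, for which $(g,T)(\gamma)=(g,\mathrm{Id})(\alpha)$, and intersect the corresponding pointwise-open set with finitely many further pointwise-open sets, obtained by your $V_1$-type argument applied to (the $T$-preimages of) the level sets $A_1,\dots,A_k$ of $\alpha$, which force the relevant symmetric differences $\mu(T(A_i)\triangle R(A_i))$ to be smaller than $\epsilon/2k$; then $\int d_X(f(\omega)\alpha(\omega),g(\omega)\alpha(\omega))\,d\omega$ is bounded by the $\gamma$-condition plus the total measure of those symmetric differences, giving $<\epsilon$. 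This extra symmetric-difference control is exactly what the paper's proof supplies and what a single translated subbasic set cannot.
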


\begin{proof}
Since $X$ has more than one point, we may find $c_1,c_2\in X$ with $d_X(c_1,c_2)=s>0$. Let
$\beta_1=c_1\chi_{B}+c_1\chi_{B^c}$ and let 
$\beta_2=c_1\chi_{B}+c_2\chi_{B^c}$ and consider 
\[V((f,T),\beta_1,\beta_2,\epsilon s/4)=\{(g,R):\int d_X((f,T)(\beta_i),(g,R)(\beta_i))<\epsilon s/4,i\leq 2\}.\]

Then whenever $(g,R)\in V((f,T),\beta_1,\beta_2,\epsilon s/4)$, we have

\begin{equation}
\int_{T(B)\cap R(B)^c}d_X(f(\omega)(c_1),g(\omega)(c_1))d\omega<\epsilon s/4,
\end{equation} and
\begin{equation}
\int_{T(B)\cap R(B)^c}d_X(f(\omega)(c_1),g(\omega)(c_2))d\omega<\epsilon s/4,
\end{equation}
so adding inequalities (1) and (2)
\begin{equation*}
\int_{T(B)\cap R(B)^c}d_X(f(\omega)(c_1),g(\omega)(c_1))+d_X(f(\omega)(c_1),g(\omega)(c_2)) d\omega<\epsilon s/2.
\end{equation*}
This shows, using the triangle inequality, that
\begin{equation*}
\int_{T(B)\cap R(B)^c} s d\omega<\epsilon s/2, \mbox{ so } \mu(T(B)\cap R(B)^c)<\epsilon/2.
\end{equation*}

Similarly, $\mu(T(B)^c\cap R(B))<\epsilon/2$, so $\mu(T(B) \triangle R(B))<\epsilon$ as desired.

For the second part, write $\alpha=\sum_{i=1}^k a_i\chi_{A_i}$ where $A_1,A_2,\dots,A_k$ is a partition of $[0,1]$
and $a_1,\dots,a_k\in X$.  By applying the previous argument, we can find a basic open set $V_4$
with $(f,T)\in V_4$ such that whenever $(g,R)\in V_4$ we have $\mu(T(A_i)\triangle R(A_i))<\epsilon/2k$ for $i\leq k$. Now consider $V_3=\{(g,R): \int d_X((f,T)(\gamma),(g,R)(\gamma)) d\omega<\epsilon/2\}$,
where $\gamma(\omega)=\alpha(T(\omega))$. Notice that $(h,Id)(\alpha)=(h,T)(\gamma)$
for all $h\in L^0([0,1],G)$.

Choose $(g,R)\in V_4\cap V_3$. Then
\begin{alignat*}{1}
 &\int d_X (f(\omega) \alpha(\omega),g(\omega) \alpha(\omega)) d\omega\\
=&\int d_X ((f,Id)\alpha(\omega),(g,Id)\alpha(\omega)) d\omega\\
=&\int d_X ((f,T) \gamma(\omega),(g,T) \gamma(\omega)) d\omega\\ 
\leq& \sum_{i\leq k}\int_{A_i\cap R(T^{-1}(A_i))} d_X (f(\omega) \gamma(T^{-1}\omega),g(\omega) \gamma(R^{-1}(\omega)) d\omega+\sum_{i\leq k} \mu(T(A_i)\triangle R(A_i)) \\
\leq& \epsilon/2+\epsilon/2\leq \epsilon.
\end{alignat*}

\end{proof}

Thus, we proved that the product topology is indeed the pointwise convergence topology. 

\subsection{Uniform convergence topology}

In this section we characterize the metric for uniform convergence in terms of 
the metrics for uniform convergence for $\Aut$ and for $L^0([0,1],G)$. By the metric of
uniform convergence in $\Aut$ we mean $\Delta_u(T,R)=\mu\{\omega\in [0,1]: T(\omega)\neq R(\omega)\}$. Note that we 
could also use $\Delta_u'(T,R)=\sup\{\mu(T(A)\triangle R(A)):A\subseteq [0,1]$ measurable $\}$ (see \cite{Ha}).
Similarly, since $G=Isom(X,d)$ has a metric of uniform convergence $d_{u}$, we have by section 
\ref{KlM3} that $(L^0([0,1],G),\tau,\hat{d_{u}})$ is topometric where $\tau$ is the topology of convergence in 
measure, so we can consider $\hat{d_{u}}$ as a natural uniform metric for $L^0([0,1],G)$.

For clarity, we first do the argument for $X=\mathbb{N}$ (where $d(n,m)=1$ if $n\neq m$) and 
$G=S_\infty$ and then we consider the general case. Note that for $\sigma,\rho\in S_\infty$
distinct $d_u(\sigma,\rho)=1$. In what follows, we write $e$ for the identity in $S_\infty$, $C_e$ for the function from $[0,1]\to S_\infty$ with constant value $e$ and $Id$ for the map from $[0,1]$ to $[0,1]$ defined by $Id(\omega)=\omega$.

\begin{Pro}\label{reltopometricSinfty}
For $(f,T) \in L^0([0,1],S_\infty)\rtimes \Aut$, we have:
$$L_u((f,T), (C_e,Id))=\mu(\{\omega \in [0,1]: f(\omega)\neq e\}\cup\{ \omega \in [0,1]: T^{-1}(\omega)\neq \omega\}).$$
\end{Pro}

\begin{proof}

Let $C=\{\omega \in [0,1]: f(\omega)=e$ and $T^{-1}(\omega)=\omega\}$.
Then for any function $\alpha \in L^0([0,1],\mathbb{N})$ and for any $\omega\in C$ we have
\[(f,T)(\alpha)(\omega)=f(\omega)(\alpha(T^{-1}(\omega)))=f(\omega)(\alpha(\omega))=
(\alpha)(\omega)\]
 and thus $d_u((f,T), (C_e,Id))\leq \mu(C^c)$.

For the other inequality, let us define 
$A=\{\omega \in [0,1]: f(\omega)\neq e \wedge T^{-1}(\omega)=\omega\}$.

For each $\omega\in A$ let $n_\omega$ be the minimum natural number such that 
$f(\omega)(n_\omega)\neq n_\omega$ and let 
$\alpha(\omega)=n_\omega$. Note that $\alpha$ has been defined on $A$ and in 
this set it is measurable. Also note that for $\omega \in A$,
$(f,T)(\alpha)(\omega)=f(\omega)(\alpha(T^{-1}(\omega)))=f(\omega)(n_\omega)\neq n_\omega=\alpha(\omega)$ so the two automorphisms $(f,T)$, $(C_e,Id)$ disagree in every 
$\omega \in A$ when they act in $\alpha$.

Now let $B=\{\omega \in [0,1]: T^{-1}(\omega)\neq \omega\}$.
We may write $B=B_0\cup \cup_{i\geq 2} B_i$ where $B_0$ are the points where 
$T$ is an aperiodic map and $B_i$ are the points where $T$ is a cycle 
of period $i$. All the sets $B_i$ are measurable.

First we will deal with $B_2=\{\omega\in B: T^2(\omega)=\omega\}$. Let $C_2$ be a 
measurable subset of $B_2$ such that $C_2$, $T(C_2)$ are disjoint and 
$B_2=C_2 \cup T(C_2)$.

%For $\omega\in C_2$, let $\alpha_1(\omega)=0$ and $\alpha_1(T(\omega))=1$.

%Note that for $\omega\in C_2$, as long as $f(\omega)(1)\neq 0$ we have that
%$f(\omega)(\alpha_1(T^{-1}(\omega)))=f(\omega)(1)\neq 0$ but $\alpha_1(\omega)=0$.
%Similarly, for $\omega\in T(C_2)$, as long as $f(\omega)(0)\neq 1$ we have that
%$f(\omega)(\alpha_1(T^{-1}(\omega)))=f(\omega)(0)\neq 1$ but $\alpha_1(\omega)=1$.
%So $\alpha_1$ witnesses that $(f,T)\alpha_1(\omega)\neq (C_e,Id)\alpha_1(\omega)$ for all
%$\omega\in B_2$ outside the set $\{\omega\in C_2:g(\omega)^{-1}(0)= 1\}\cup \{\omega\in %T(C_2):g(\omega)(0)= 1\}$

For $n\geq 1$, define $\alpha_n$ as follows. For $\omega\in C_2$, let $\alpha_n(\omega)=0$ and $\alpha_n(T(\omega))=n$. It is easy to observe that $\alpha_n$ satisfies $(f,T)\alpha_n(\omega)\neq (C_e,Id)\alpha_n(\omega)$ for $\omega\in B_2$ outside the set $\{\omega\in C_2:f(\omega)^{-1}(0)= n\}\cup \{\omega\in T(C_2):f(\omega)(0)= n\}$.
Since $f$ is fixed, $\mu(\{\omega\in C_2:f(\omega)^{-1}(0)= n\})\to 0$ as $n\to \infty$. Similarly,
$\mu(\{\omega\in T(C_2):f(\omega)(0)= n\})\to 0$ as $n\to \infty$. 

Thus $\lim_{n\to \infty} \int_{B_2} \hat d ((f,T)(\alpha_n),(C_e,Id)(\alpha_n))d\omega=\mu(B_2)$,
so restricting to $B_2$ we get $d_u((f,T),(C_e,Id) )\geq \mu(B_2)$.

We deal with $B_3=\{\omega\in B: T^3(\omega)=\omega\}$ in a similar way. Let $C_3$ be a 
measurable subset of $B_3$ such that $C_3$, $T(C_3)$, $T^2(C_3)$ are pairwise disjoint and 
$B_3=C_3 \cup T(C_3)\cup T^2(C_3)$.
For $\omega\in C_3$ and $n\geq 1$, let $\beta_n(\omega)=0$ and $\beta_n(T(\omega))=n$, 
$\beta_n(T^2(\omega))=2n$.

As above, $\beta_n$ separates $(f,T)$ from $(C_e,Id)$ in $B_3$ outside the set of exceptional points
$\{\omega\in C_3:f(\omega)^{-1}(0)= n\}\cup \{\omega\in T(C_3):f^{-1}(\omega)(n)= 2n\}\cup 
\{\omega\in T^2(C_3):f(\omega)(0)= 2n\}$. Notice that the measure of the set of exceptional points 
goes to zero as $n$ goes to infinty, so $\lim_{n\to \infty} \int_{B_3} \hat d ((f,T)(\beta_n),(C_e,Id)(\beta_n))d\omega=\mu(B_3)$ .

Using a similar approach for all the periodic pieces and approximating the aperiodic piece by periodic pieces using Rohklin's Lemma we get the desired result.

\end{proof}

\begin{Cor}\label{prodtopunif}
For $(f,T) \in L^0([0,1],S_\infty)\rtimes \Aut$, we have:\\
$\max\{\int d_u(f(\omega),e) d\mu,\Delta_u(T,Id)\}\leq 
L_u((f,T), (C_e,Id))\leq
\int d_u(f(\omega),e) d\mu +\Delta_u(T,Id)$

this shows that the metric of uniform continuity is uniformly equivalent to the product distance of the metrics $\hat d_u $ (in $L^0([0,1],S_\infty)$) and $\Delta_u$ (in $\Aut$).
\end{Cor}

Now we consider the general case, so $(X,d)$ is a Polish metric space with $diam(X)\leq 1$ and $G=\mbox{Isom}(X,d)$.
We also assume the set $X$ has more than one point, so we may find $a,b\in X$ with 
$d(a,b)=r>0$.

\begin{Pro}\label{reltopometric}
For $(f,T), (h,S)\in L^0([0,1],G)\rtimes \Aut$, the functions sending $\omega\in [0,1]$ to
$d_u(f(T^{-1}(\omega)), h(S^{-1}(\omega)))$ and $d_G(f(T^{-1}(\omega)), h(S^{-1}(\omega)))$ are both measurable and integrable.
\end{Pro}

\begin{proof}

First observe that since $f$ is measurable and $T$ is an invertible  measurable preserving transformation, the map that sends $\omega$ to $f(T^{-1}(\omega))$ is measurable. So is the map that 
sends $\omega$ to $h(S^{-1}(\omega))$. Finally, since the map $d_u:G\times G\to [0,1]$ is Borel measurable, then the map from $[0,1]$ to $[0,1]$ that sends $\omega$ to $d_u(f(T^{-1}(\omega)),h(S^{-1}(\omega)))$ is measurable.

Similarly, since the function $d:G\times G\to [0,1]$ is Borel measurable, the map from $[0,1]$ to $[0,1]$ that sends $\omega$ to $d_G(f(T^{-1}(\omega)),h(S^{-1}(\omega)))$ is measurable.

Since both functions are bounded by $1$, they are also integrable.
\end{proof}

\begin{thm}\label{unifmetric} Let
\[A=\{\omega \in [0,1]: f(\omega))\neq \omega \wedge T^{-1}(\omega)=\omega\},\] 
\[B=\{\omega \in [0,1]: T(\omega)\neq \omega\}.\]
Then
\[\frac{r}{8} \mu(B)+\int_A d_u(f(\omega), e)d\mu \leq L_u((f,T), (C_e,Id))\leq \\ \mu(B)+\int_A d_u(f(\omega), e)d\mu.\]
\end{thm}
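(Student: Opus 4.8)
The plan is to prove this as a direct generalization of Proposition~\ref{reltopometricSinfty}, replacing the combinatorial "pick the smallest moved point" trick for $S_\infty$ with a geometric argument using the two fixed points $a,b\in X$ with $d(a,b)=r$. As before I decompose $[0,1]$ (up to the set where $(f,T)$ already agrees with $(C_e,Id)$) into the set $A$ where $T^{-1}$ fixes $\omega$ but $f(\omega)\neq e$, and the set $B$ where $T^{-1}$ moves $\omega$; on the complement of $A\cup B$ every test function is fixed, which will give the easy upper bound $L_u\le \mu(B)+\int_A d_u(f(\omega),e)\,d\mu$ immediately (for any $\alpha$, $\hat d((f,T)\alpha,(C_e,Id)\alpha)=\int d(f(\omega)\alpha(T^{-1}\omega),\alpha(\omega))\,d\omega$, which is bounded on $B$ by $1$ and on $A$ by $d_u(f(\omega),e)$ since $\alpha(T^{-1}\omega)=\alpha(\omega)$ there).

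For the lower bound on the $A$ part: given $\epsilon>0$, for a.e.\ $\omega\in A$ there is a point $x_\omega\in X$ with $d(f(\omega)x_\omega,x_\omega)\ge d_u(f(\omega),e)-\epsilon$; by lower semicontinuity of $d_u$ this is a Borel condition and Jankov--von Neumann gives a measurable selection $\alpha$ on $A$ with $\alpha(\omega)=x_\omega$, and since $T^{-1}\omega=\omega$ on $A$ the displacement of $\alpha$ under $(f,T)$ on $A$ is $\int_A d(f(\omega)x_\omega,x_\omega)\,d\mu\ge \int_A d_u(f(\omega),e)\,d\mu-\epsilon$. Letting $\epsilon\to 0$ handles $A$.

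For the $B$ part I follow the period-by-period decomposition $B=B_0\cup\bigcup_{i\ge 2}B_i$ of $\tilde G$'s $\Aut$-component exactly as in Proposition~\ref{reltopometricSinfty}: for each periodic piece $B_i$ choose a measurable transversal $C_i$ with $B_i=\bigsqcup_{j=0}^{i-1}T^j(C_i)$, and on the orbit $\{\omega,T\omega,\dots,T^{i-1}\omega\}$ define a test function taking the values $a$ and $b$ in a pattern (e.g.\ $a$ at $\omega$, $b$ at $T\omega$, then continuing so that consecutive points carry distinct values) so that $(f,T)\alpha$ and $\alpha$ must disagree at enough points of the orbit regardless of the isometries $f(\cdot)$; since $f$ is fixed and $\{a,b\}$ is a two-element set the argument is actually cleaner than the $S_\infty$ case because I don't need $n\to\infty$ — a fixed pattern suffices, though I may only get a fraction of $\mu(B_i)$ (hence the factor $r/8$: a worst-case orbit contributes at least, say, one disagreement out of the $i$ points, each worth $\ge r$, averaged). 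The aperiodic piece $B_0$ is approximated from inside by periodic pieces via Rohklin's Lemma, losing an arbitrarily small amount of measure. Summing over $i$ and combining with the $A$ estimate gives $L_u((f,T),(C_e,Id))\ge \tfrac{r}{8}\mu(B)+\int_A d_u(f(\omega),e)\,d\mu$.

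The main obstacle is the bookkeeping on the periodic pieces: I must choose the $\{a,b\}$-valued pattern on each $T$-orbit so that the number of forced disagreements is bounded below by a fixed fraction of the orbit length uniformly in the (unknown, $\omega$-dependent) isometries $f(T^j\omega)$ — this is where the constant $r/8$ comes from and where one has to be slightly careful, since unlike in $S_\infty$ an isometry can move $a$ to $b$ and vice versa, so a naive two-coloring could be entirely undone by $f$; the fix is to use a pattern long enough (over several orbit points, exploiting that $f$ acts pointwise and cannot coherently shift the pattern) and to note that $(f,T)$ permutes the coordinates cyclically while $(C_e,Id)$ does not, so at least a definite proportion of coordinates are mismatched. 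I would also double-check the measurability of all the transversals $C_i$ and of the selection $\alpha$, but these are standard (Rohklin, Jankov--von Neumann) and identical to what is already used in the paper.
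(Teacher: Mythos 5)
Your upper bound and your treatment of the set $A$ (measurable selection of near-optimal witnesses via lower semicontinuity of $d_u$ and Jankov--von Neumann) match the paper's argument. The gap is in the lower bound on $B$. You propose to fix a single $\{a,b\}$-valued pattern on each periodic tower, chosen independently of $f$, and to argue that it forces a definite proportion of disagreements ``regardless of the isometries $f(\cdot)$''. This is false, and the difficulty you yourself flag (an isometry can interchange $a$ and $b$) is not repaired by taking a longer pattern or by the remark that $(f,T)$ shifts coordinates cyclically. Concretely, take $X=\{a,b\}$ with $d(a,b)=r$, let $T$ have period $2k$ on $B_{2k}$, and let $f(\omega)$ be the swap isometry for every $\omega$: then the alternating pattern (and more generally any pattern with $v_j\neq v_{j-1}$ along the orbit) has displacement exactly $0$ at every point, while the constant pattern has full displacement; if instead $f\equiv e$, the situation is reversed. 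So no $f$-independent pattern can work, and for odd periods the ``cannot coherently shift'' argument only forces $O(1)$ disagreements per orbit, which gives a contribution of order $r/i$ on $B_i$, not a fixed fraction of $\mu(B_i)$ as the statement requires.

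The missing idea, which is the heart of the paper's proof, is a dichotomy \emph{depending measurably on $f$}: for each $n$ split $B_n$ into $D_n=\{\omega: d(a,f(\omega)(a))\geq r/2\}$ and $E_n=\{\omega: d(a,f(\omega)(b))\geq r/2\}$; since $f(\omega)$ is an isometry and $d(a,b)=r$, the triangle inequality gives $B_n=D_n\cup E_n$, so one of the two has measure at least $\mu(B_n)/2$. If $D_n$ is large one uses the constant test function $a\chi_{B_n}$ (displacement $\geq r/2$ on $D_n$, yielding $\mu(B_n)r/4$); if $E_n$ is large one uses the alternating $a/b$ function built over a Rohklin tower whose base is chosen (up to $\delta$) independent of $E_n$, so that about half of $E_n$ sits on the $a$-levels and there the displacement is $d(f(\omega)(b),a)\geq r/2$, yielding $\mu(B_n)r/8$. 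This independence step is also absent from your sketch, and it is exactly where the constant $r/8$ (rather than $r/4$) comes from. With that dichotomy in place, the rest of your outline (periodic decomposition, Rohklin approximation of the aperiodic part, summing the pieces and adding the $A$-contribution) goes through as in the paper.
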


\begin{proof}
Let $\alpha\in L^0([0,1],X)$ and let $B=\{\omega \in [0,1]: T(\omega)\neq \omega\}$. Then for each $\omega\in B$, $d(f(\omega)(\alpha(T^{-1}(\omega)), \alpha(\omega)))\leq 1$. 
On the other hand for any $\omega\in A$, $d_u(f(\omega), e)\geq d(f(\omega)(\alpha(\omega)), \alpha(\omega))$, 
so $\int_A d_u(f(\omega), e)d\mu \geq \int_A d(f(\omega)(\alpha(\omega)), \alpha(\omega)) d\mu$.
This shows that $L_u((f,T), (C_e,Id))\leq \mu(B)+\int_A d_u(f(\omega), e)d\mu$.

We now prove the other inequality. Let $\epsilon>0$. 

For each $\omega\in A$ let $\alpha_\omega\in X$ be  such that 
$d(f(\omega)(\alpha_\omega),\alpha_\omega))+\epsilon\geq d_u(f(\omega),e)$. Since $d$, $d_u$ are measurable, we may define a measurable function $\alpha$ on $A$ so that
$d(f(\omega)(\alpha(\omega)),\alpha(\omega))+\epsilon\geq d_u(f(\omega),e)$. 
Note that for $\omega \in A$, $\int_A d(f(\omega)(\alpha(\omega)),\alpha(\omega))d\omega +\epsilon\geq \int_A d_u(f(\omega),e)d\omega$. 

Recall that $B=\{\omega \in [0,1]: T^{-1}(\omega)\neq \omega)\}$ and as before, write $B=B_0\cup \cup_{i\geq 2} B_i$ where $B_0$ are the points where $T$ is an aperiodic map and $B_i$ are the points where $T$ is a cycle 
of period $i$. All the sets $B_i$ are measurable and $T(B_n)=B_n$ for all $n$.

For each $n\geq 2$ we define a function in 
$B_n$. Consider the sets $D_n=\{\omega\in B_n: d(a,f(\omega)(a))\geq r/2\}$ and 
$E_n=\{\omega\in B_n: d(a,f(\omega)(b))\geq r/2\}$. By the triangle inequality, one of the two sets
must have measure $\geq \mu(B_n)/2$. 

\textbf{Case 1} $\mu(D_n)\geq \mu(B_n)/2$.

Define $\alpha_1=a\chi_{B_n}$, then whenever $\omega\in D_n$, we have 
$d((f,T)(\alpha_1)(\omega),(C_e,Id)(\alpha_1)(\omega))=d(f(\omega)(a),a)\geq r/2$, 
so $\int_{B_n}d((f,T)(\alpha_1)(\omega),(C_e,Id)(\alpha_1)(\omega)) d\omega\geq \mu(D_n)r/2
\geq \mu(B_n)r/4$.

\textbf{Case 2} $\mu(E_n)\geq \mu(B_n)/2$.

Assume for the sake of simplicity $n=2k$ and that there is a measurable set
 $C_n$ such that $C_n,\dots, T^{2k-1}(C_n)$ are pairwise disjoint, form a partition of $B_n$
and all of them are independent from $E_n$ (this last part can be assured up to $\delta$
for any $\delta>0$). Define $\alpha_2=a\chi_{C_n\cup T^2(C_n)\cup \dots \cup T^{2k-2}(C_n)}+
b\chi_{T(C_n)\cup \dots \cup T^{2k-1}(C_n) }$. Then whenever 
$\omega\in E_n\cap (C_n\cup T^{2}(C_n)\cup \dots \cup T^{2k-2}(C_n))$, we have that
$d((f,T)(\alpha_2)(\omega),(C_e,Id)(\alpha_2)(\omega))=d(f(\omega)(b),a)\geq r/2$
so $\int_{B_n}d((f,T)(\alpha_2)(\omega),(C_e,Id)(\alpha_2)(\omega)) d\omega\geq 
(\mu(E_n)/2) \cdot (r/2) \geq \mu(B_n)r/8$.

\end{proof}

Note that the above formula implies that  $$\frac{r}{8} \max\{ \mu(B),\hat d_u(f, C_e)\} \leq L_u((f,T), (C_e,Id))\leq \mu(B)+\hat d_u(f, C_e)$$ so the uniform metric corresponds to the product topology of the uniform metrics $\hat d_u$ in $L^0([0,1],G)$ and $\Delta_u$ in $\Aut$.

\section{The Rohklin property in $\tilde{S}_\infty$}\label{Den}

In this section we study the Rohklin property on  $\tilde{S}_\infty$. We notice first that the strong Rohklin property is not satisfied by any $\tilde{G}$, so $\tilde{G}$ will not have ample generics. We then show that $\tilde{S}_\infty$ has the Rohklin property. From this proof, we can extract a sufficient condition for $\tilde{G}$ to have the Rohklin property, namely the Rohklin property under powers defined below. We also show that this is a rather robust definition, by showing several groups that have this property. In the next section, we will show via a topological argument the general case, however, we keep this argument as it shows a different approach (more algebraic in nature) but interesting on itself.
 
\begin{Pro}
Let $G$ be a topological group. Then $\tilde{G}$ does not have the strong Rokhlin property.
\end{Pro}

\begin{proof}
Suppose that $(f,R)\in \tilde{G}$ has a comeager orbit. 
%Notice that the projection on $\Aut$ of $[(f,r)]_{\tilde{G}}$ is exactly $[r]_G$. %Since the former is dense, the latter must also be. 
By the Kuratowski-Ulam Theorem, there is a comeager set $X_G$ such that for every $g\in X_G$, $\{T: (g,T)\in [(f,R)]_{\tilde{G}}\}$ is comeager. So fix such $g$ and notice that if $(g,T)\in [(f,R)]_{\tilde{G}}$, then $T$ and $R$ are conjugates. Therefore the orbit of $R$ is comeager, which we know it is not the case.
\end{proof}

\begin{thm} ${\tilde{S}}_{\infty}$ has the Rohklin property.
\end{thm}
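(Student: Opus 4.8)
\textit{Plan.} The plan is to exhibit one element of $\tilde S_\infty=L^0([0,1],S_\infty)\rtimes\Aut$ whose conjugacy class is dense. Specialising to $X=\N$ with the $\{0,1\}$-valued metric and $G=S_\infty$, and using the identification from Section~\ref{uct} (equivalently \cite{Ib}) of the pointwise-convergence topology on $\tilde S_\infty$ with the product of the topology of convergence in measure on $L^0([0,1],S_\infty)$ and the weak topology on $\Aut$, it suffices to meet every basic open set $U\times W$, with $U$ a convergence-in-measure ball around a prescribed $f_0\in L^0([0,1],S_\infty)$ and $W$ a weak neighbourhood of a prescribed $T_0\in\Aut$. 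The element I would take is $(C_g,T)$, where $g\in S_\infty$ is a generic element (it exists because $S_\infty$ has ample generics \cite{HHLS}; crucially, for generic $g$ every power $g^k$ again has dense, in fact comeager, conjugacy class in $S_\infty$) and $T\in\Aut$ is a transformation with dense conjugacy class, which exists by the classical weak Rohklin property of $\Aut$. A short computation with the semidirect-product law $(f_1,T_1)(f_2,T_2)=(\omega\mapsto f_1(\omega)f_2(T_1^{-1}\omega),\,T_1T_2)$ gives
\[
(h,S)(C_g,T)(h,S)^{-1}=\bigl(\ \omega\longmapsto h(\omega)\,g\,h(R^{-1}\omega)^{-1},\ R\ \bigr),\qquad R:=STS^{-1},
\]
and since for every $R$ conjugate to $T$ and every $h\in L^0([0,1],S_\infty)$ one may choose $S$ with $STS^{-1}=R$, this describes the entire conjugacy class of $(C_g,T)$.

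\textit{The density argument.} Given $(f_0,T_0)$ and $U\times W$, I would first use density of $\mathrm{Conj}_{\Aut}(T)$ to pick $R\in W$ conjugate to $T$, and then I must produce $h\in L^0([0,1],S_\infty)$ with $\omega\mapsto h(\omega)gh(R^{-1}\omega)^{-1}$ in $U$, i.e.\ agreeing in measure with $f_0$ up to a small error. Rewriting the target as the twisted cocycle equation $h(\omega)=f_0(\omega)\,h(R^{-1}\omega)\,g^{-1}$, I would solve it by decomposing $[0,1]$ into the aperiodic part of $R$ and the pieces where $R$ is periodic of each period $k$. On the aperiodic part, Rohklin's lemma gives tall towers $F,RF,\dots,R^{n-1}F$; setting $h:=e$ on $F$ and propagating upward so that $h$ on $R^iF$ equals $f_0(\omega)f_0(R^{-1}\omega)\cdots f_0(R^{-(i-1)}\omega)\,g^{-i}$, the cocycle equation — hence $h(\omega)gh(R^{-1}\omega)^{-1}=f_0(\omega)$ — holds off a set of measure at most $\tfrac1n$ plus the Rohklin error, so the first coordinate lands in $U$ once $n$ is large. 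On a genuine period-$k$ piece the tower closes up, and unwinding the propagation shows that the base value $h_0$ must satisfy $h_0(\omega)g^kh_0(\omega)^{-1}\approx f_0(\omega)f_0(R^{-1}\omega)\cdots f_0(R^{-(k-1)}\omega)$ for a.e.\ $\omega$; this is solvable (measurably, via Jankov--von Neumann) precisely because $g^k$ has dense conjugacy class. Assembling the pieces produces a conjugate of $(C_g,T)$ inside $U\times W$, so $(C_g,T)$ has dense conjugacy class and $\tilde S_\infty$ has the Rohklin property.

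\textit{Main obstacle.} The step I expect to be the crux is solving the twisted cocycle equation on the periodic part of $R$: there the closing-up constraint forces one to realise the essentially arbitrary ``holonomy'' $f_0(\omega)f_0(R^{-1}\omega)\cdots f_0(R^{-(k-1)}\omega)$ of $f_0$ around each $k$-cycle as a conjugate of $g^k$, and this is exactly what one abstracts as the ``Rohklin property under powers'' used for the general transfer theorem — satisfied by $S_\infty$ because a generic permutation has all its powers with dense conjugacy class. A secondary point is the measurability of the $h$ produced on the towers, which is transparent here from the explicit formulas but in the general setting is obtained by a Jankov--von Neumann uniformisation as in Theorem~\ref{transfmetL}. (I note that if one is willing to invoke the stronger classical fact that $\Aut$ has an \emph{aperiodic} transformation with dense conjugacy class, one can take $T$ aperiodic from the outset, the periodic pieces never occur, and no hypothesis on $g$ beyond a dense conjugacy class is used; I have phrased the argument above so as also to motivate the ``powers'' hypothesis employed later.)
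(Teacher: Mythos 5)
Your proof is correct, and its skeleton is the same as the paper's: the witness is the same pair $(C_g,T)$ with $g$ a generic permutation and $T$ of dense conjugacy class, the product description of the pointwise topology reduces density to solving the approximate twisted coboundary equation $h(\omega)gh(R^{-1}\omega)^{-1}\approx f_0(\omega)$ along towers, the closing-up (holonomy) constraints are met using density of the conjugacy classes of the powers $g^k$, and Jankov--von Neumann supplies measurability. Where you genuinely diverge is the tower step. The paper never splits $R$ into aperiodic and periodic parts: it keeps the aperiodic $S$, replaces it by a nearby \emph{periodic} $S_0$ of large period $N$ (Halmos's uniform approximation theorem) and solves the equation exactly over $S_0$; since every point is then periodic, the holonomy constraint appears everywhere and genericity of $\sigma^N$ is used at every point --- this is precisely what gets abstracted into the ``Rohklin property under powers'' of Section \ref{Den}. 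You instead work with $R$ itself, and on its aperiodic part solve by pure telescoping up a Rokhlin tower, the only loss being the base level and the Rokhlin error; the power hypothesis is invoked only on the genuinely periodic pieces. As you note, taking $T$ aperiodic (as one may, and as the paper does) makes those pieces empty; observe that your telescoping then uses no property of $g$ at all, since the $g^{-i}$ factors cancel, so over an aperiodic $T$ your construction gives a dense class for $(C_g,T)$ with no hypothesis on $g$ --- a bit more than either your statement or the paper's Section \ref{Den} argument claims, whereas the paper's periodic-approximation route is deliberately arranged to motivate the powers hypothesis used for the general transfer theorem. Two small points for a write-up: on the period-$k$ part you should first fix a measurable cross-section $C_k$ with $C_k,RC_k,\dots,R^{k-1}C_k$ partitioning it (as the paper does with $C_2,C_3$ in Section \ref{uct}) before speaking of ``the base value $h_0$'', and the approximate matching at the base should be phrased via the basic neighborhoods of $S_\infty$ (agreement on a prescribed finite set), which is exactly what density of the conjugacy class of $g^k$ provides and what keeps the relation used for the Jankov--von Neumann selection Borel.
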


\begin{proof}
Choose $\sigma\in S_\infty$ such that its conjugacy orbit is comeager. Remember that $\sigma$ must have infinite copies of any $k$-cycle for every $k\geq 1$. Denote by $C_\sigma$ the constant function in $\Lcero$ with value $\sigma$. Let $S\in \Aut$ with a dense conjugacy orbit, so $S$ is aperiodic. We claim that $(C_\sigma, S)$ has a dense orbit. 

Let $U\times V$ be an open subset of ${\tilde{S}}_\infty$. Since the orbit of $S$ is dense, there is $T\in\Aut$ such that $T^{-1}ST\in V$. 

Let us also fix a simple function $h(x):=\Sigma_{i<M} \tau_i\chi_{A_i}(x)$ in $TU$. Furthermore, we can suppose that the $A_i$ are Borel sets. Notice there is $\varepsilon>0$ and there is $K\in\mathbb N$ such that 
\[\{f\in\Lcero| \mu(\{x\in [0,1] \ | \ \;\forall n<K\; f(x)(n)= h(x)(n)\})\geq 1-\varepsilon\}\subseteq TU.\]

So we need to find $g\in\Lcero$ such that
\begin{equation} \label{eq:1}
g(x)h(x)(n)=\sigma g(S^{-1}(x))(n),
\end{equation}
for all $n\leq K$ and all $x$ except for a set of measure smaller than $\varepsilon$. % (\textbf{Pregunta: usamos $S$ o $T$ en la f\'ormula?, como las \'orbitas aperiodicas son densas podemos suponer que $T$ es aperi\'odico})

Since $S$ is aperiodic, by Rohklin's Lemma, given $\varepsilon$ as above and any $N\geq 1$ there is a measurable subset $E\subseteq [0,1]$ such that $E,S(E),\dots,S^{N-1}(E)$ are pairwise disjoint and $\mu(\cup_{0\leq i\leq N-1}S^i(E))\geq 1-\epsilon/2$. Choose $N$ such that $1/N<\epsilon/2$. We now define $S_0$ so that it coincides with $S$ on $\cup_{0\leq i\leq N-2}S^i(E)$, as $S^{-N+1}(x)$ for $x\in S^{N-1}(E)$ and as a periodic map of period $N$ in $[0,1] \setminus\cup_{0\leq i\leq N-1}S^i(E)$. Thus the map $S_0$ is a measure preserving transformation with period $N$ such that $d_u(S,S_0)=\mu(\{x\in [0,1]| S(x)\neq S_0(x)\})\leq \epsilon/2+1/N<\epsilon$ (this is called by Halmos the Uniform Approximation Theorem). By enlarging $E$ if necessary and using the fact that $S_0$ is periodic with period $N$, we may find a new set $E_0\supseteq E$ such that $E_0,S_0(E_0),\dots,S_0^{N-1}(E_0)$ are pairwise disjoint and $\mu(\cup_{0\leq i\leq N-1}S_0^i(E_0))=1$.

As said earlier, we need to find $g\in\Lcero$ that satisfies equation \ref{eq:1} % such that $g(x)(h(x)(n))=\sigma(g(S^{-1}(x)))(n)$
for all $x$ but a set of measure smaller than $\varepsilon$ and all $n\leq K$; thus it suffices to find $g\in\Lcero$ such that 
\begin{equation} \label{eq:2}
g(x)h(x)(n)=\sigma g(S_0^{-1}(x))(n)
\end{equation}
for all $x$ and all $n<K$. 

Let us see what we need. Assume we have defined a function $g$ that satisfies (\ref{eq:2}) for all $n<K$. We fix $x\in [0,1]$. If we consider all the conditions that we can derive for each $n\leq K$ and its orbit under $S_0$, we obtain the following diagram, which must ``commute'' (over $n\leq K$):

\newcommand\Bigcircle{\raisebox{-0.5mm}{\scalebox{1.7}{$\bigcircle$}}}

\begin{tikzpicture}
  \matrix (m) [matrix of math nodes,row sep=3em,column sep=4em,minimum width=2em]
  {
    \Bigcircle & \Bigcircle &\Bigcircle &\Bigcircle &\Bigcircle \\
    \Bigcircle & \Bigcircle &\Bigcircle &\Bigcircle &\Bigcircle  \\};
  \path[-stealth]
    (m-1-1) edge node [above] {\tiny$\sigma$} (m-1-2)
		(m-1-2) edge node [above] {\tiny$\sigma$} (m-1-3)
		(m-1-3) edge [dashed,->] (m-1-4)								
		(m-1-4) edge node [above] {\tiny$\sigma$} (m-1-5)	
		(m-2-5) edge node [left] {\tiny$g(x)$} (m-1-5)
		(m-2-1) edge node [below] {\tiny$h(S_0(x))$} (m-2-2)
						edge node [left] {\tiny$g(x)$} (m-1-1)
		(m-2-2) edge node [below] {\tiny$h(S_0^2(x))$} (m-2-3)
						edge node [left] {\tiny$g(S_0(x))$} (m-1-2)
		(m-2-3) edge [dashed,->] (m-2-4)
						edge node [left] {\tiny$g(S_0^2(x))$} (m-1-3)
		(m-2-4) edge node [below] {\tiny$h(x)$} (m-2-5)
						edge node [left] {\tiny$g(S_0^{N-1}(x))$} (m-1-4);
\end{tikzpicture}

In other words, the diagram represents, for each $n\leq K$ the different equations that all of its corresponding images must satisfy. Notice that for each $n$, the corresponding images are going to be finite, so that we can also think of the circles from the previous diagram representing the union of all the images where we need the equations to be satisfied. 
%
%\textbf{\begin{tikzpicture}[circ/.style={draw,circle, inner sep=0pt ,minimum size=1em]}]
%\matrix (m) [ampersand replacement=\&, row sep=3em,column sep=4em,minimum width=2em]
  %{
   %\node[circ, anchor=east]{}; \& \node[circ]{}; \& \node[circ]{};\& \node[circ]{};\& \node[circ]{}; \\
   %\node[circ, anchor=east]{};\& \node[circ]{};\& \node[circ]{};\& \node[circ]{};\& \node[circ]{}; \\};
  %\path[-stealth]
    %(m-1-1) edge node [left] {\tiny$g(x)$} (m-2-1)
     %edge node [above] {\tiny$\sigma$} (m-1-2)
		%(m-1-2) edge node [left] {\tiny$g(S_0(x))$} (m-2-2)
            %edge node [above] {\tiny$\sigma$} (m-1-3)
		%(m-1-3) edge node [left] {\tiny$g(S_0^2(x))$} (m-2-3)
            %edge [dashed,->] (m-1-4)								
		%(m-1-4) edge node [left] {\tiny$g(S_0^{N-1}(x))$} (m-2-4)
            %edge node [above] {\tiny$\sigma$} (m-1-5)	
		%(m-1-5) edge node [left] {\tiny$g(x)$} (m-2-5)
		%(m-2-1) edge node [below] {\tiny$h(S_0(x))$} (m-2-2)
		%(m-2-2) edge node [below] {\tiny$h(S_0^2(x))$} (m-2-3)
		%(m-2-3) edge [dashed,->] (m-2-4)
		%(m-2-4) edge node [below] {\tiny$h(x)$} (m-2-5);
%\end{tikzpicture}}

Then we get $g(S_0(x))h(S_0(x))(n)=\sigma(g(x)(n))$ and $g(S_0^2(x))(h(S_0^2(x)(n))=\sigma g((S_0(x))(n)$ and thus (putting together the first two pieces of the diagram) $g(S_0^2(x))h(S_0^2(x))h(S_0(x))(n)=\sigma^2(g(x)(n))$. Applying the same argument $N-1$ times we get 
\begin{equation}
\label{eq:3}
g(x)h(S_0^{N-1}(x))\cdots h(S_0(x))h(x)(n) =\sigma^Ng(x)(n).
\end{equation}

%\begin{equation}
%\label{eq:3}\begin{split}
%g(x)h(S_0^{N-1}(x))\cdots h(S_0(x))(n) &=g(S_0^N(x))h(S_0^{N-1}(x))\cdots h(S_0(x))(n)\\&=\sigma^Ng(x)(n)
%\end{split}
%\end{equation}

We build the desired function $g(x)$ backwards, starting from equation (\ref{eq:3}). %$$g(x)h(S_0^{N-1}(x))\cdots h(S_0(x))(n)=\sigma^N(g(x)(n))$$
So fix $x\in E_0$ and define \begin{equation}
\label{eq:4}f(x):=h(S_0^{N-1}(x))h(S_0^{N-2}(x))\ldots h(S_0(x)) h(x).
\end{equation} 

Since $\sigma^N$ is a generic element in $S_{\infty}$, for each $x\in E_0$ we can find a permutation $\rho_x\in S_\infty$ such that $\rho_x^{-1}\sigma^N\rho_x(n)=f(x)(n)$ for all $n\leq K$. Thus, whenever $x\in E_0$, define $g(x)$ to be one of such permutations $\rho_x$.

Now suppose that $g$ is defined in $E_0$ for all $n<K$ and satisfies the above property. We extend $g$ to $\cup_{1\leq i\leq N-1}S_0^i(E_0)$ so that on $S_0^i(x)\in S_0^i(E_0)$ it satisfies the following equation:
\[g(S_0^i(x))(h(S_0^{i-1}(x))\ldots h(S_0(x)) h(x)(n))=\sigma^ig(x)(n): \ \  n\leq K\]

This might not define $g(S_0^i(x))$ for all $n<K$. But for those values not defined by this equation, we use the same argument as in $E_0$. This is well defined, since $S_0$ is periodic, so that no value will be repeated if it was chosen before. An easy verification shows us that $g(x)(h(x)(n))=\sigma(g(S_0^{-1}(x)))(n)$ for all $x\in E_0\cup S_0(E_0)\cup \ldots S_0^{N-1}(E_0)$ as desired. 

This process does not necesarilly defines $g(x)(n)$ for all $n$ and as constructed the function $g$ need not be measurable. However, this can be solved using Jankov-von Neumann uniformization, as in \cite{KL}. 
Indeed, let us now show that we can find such a $g\in \Lcero$. Let $P\subseteq E_0\times S_\infty$ be defined by
$$(x,\rho)\in P \iff \rho^{-1}\sigma^N\rho(n)=f(x)(n): \ \ x\in E_0,n<K.$$
By Jankov-von Neuman uniformization, we can choose for each $x\in E_0$ a $g_0(x)\in S_\infty$ in 
a measurable way. 

Now let $P_1\subseteq S_0(E_0)\times S_\infty$ be defined by $(x,\rho)\in P_1$ if and only if:

$$ \rho h(S_0(x)) h(x)(n)=\sigma g_0(S_0^{-i}(x))(n): n<K, x\in S^i(E_0).$$

Recall that $h(x)$ is a simple measurable function and since $S_0$ is Borel measurable, so is $h(S_0(x))$. This proves that $h(S_0(x))h(x)$ is also a simple measurable function. Since we have a finite intersection of Borel conditions, $P_1$ is a Borel set. Again by Jankov-von Neuman uniformization, we can choose for each $x\in S_0(E_0)$ a $g_0(x)\in S_\infty$ in a measurable way. 

%%\textbf{Necesitamos que $E_0$ sea Borel y que $S_0$ sea Borel medible?}

Repeating this process for all $i$, we can choose $g(x)\in S_\infty$ in a measurable way.

\end{proof}

Finally we see how to generalize the above result to a wide class of Polish groups. 

\begin{defi}
Let $G$ be a Polish group. We say that $G$ has the \emph{Rohklin property under powers} if there is $g\in G$
such that for all $N\geq 1$, the orbit of $g^N$ under conjugation is dense.
\end{defi}

\begin{thm} If $G$ is a Polish group that has the \emph{Rohklin property under powers}, then ${\tilde{G}}$ has the Rohklin property.
\end{thm}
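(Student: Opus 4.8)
The plan is to mimic the structure of the proof that $\tilde{S}_\infty$ has the Rohklin property, replacing the combinatorial facts about permutations with the abstract hypothesis. Fix $g\in G$ witnessing the Rohklin property under powers, so that $g^N$ has dense conjugacy orbit for every $N\geq 1$; let $C_g\in L^0([0,1],G)$ be the constant function with value $g$. Fix $S\in\Aut$ with dense conjugacy orbit (hence aperiodic). I claim $(C_g,S)$ has dense orbit in $\tilde{G}$. Given a basic open set $U\times W\subseteq\tilde{G}$ (using that the pointwise topology is the product topology, as established in Section~\ref{uct}), first use density of the orbit of $S$ in $\Aut$ to pick $T\in\Aut$ with $T^{-1}ST\in W$; it then suffices to conjugate $(C_g,S)$ by $(e_{L^0},T)$ first and reduce to finding $f\in L^0([0,1],G)$ with $(f,\cdot)$ lying in the translated neighborhood $TU$ and satisfying an equation of the shape $f(x)\,h(S_0^{-1}(x))^{?}\cdots = \,$ a conjugate of a power of $g$, exactly as equations \eqref{eq:2}--\eqref{eq:4} in the $S_\infty$ proof.

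The key steps, in order: (1) Use the Uniform Approximation Theorem / Rohklin's Lemma to replace the aperiodic $S$ by a periodic $S_0$ of large period $N$ with $d_u(S,S_0)<\epsilon$, and a Rohklin tower $E_0,S_0(E_0),\dots,S_0^{N-1}(E_0)$ partitioning $[0,1]$; this is verbatim the $S_\infty$ argument and only uses that $\Aut$ has this structure. (2) For $x\in E_0$ form the product $F(x):=h(S_0^{N-1}(x))\cdots h(S_0(x))h(x)\in G$ coming from a simple function approximating a point of $TU$ — here the relevant neighborhood of $\tilde{G}$ in the pointwise topology is, after unwinding, a neighborhood of the identity in $L^0([0,1],G)$ of the form $\{f:\hat d(f,e_{L^0})<\delta\}$ or a finite conjunction of $\hat d$-conditions, so "being close to $h$" is a genuine $L^0$-open condition. (3) Since the conjugacy orbit of $g^N$ is dense in $G$, for each $x$ we can find $\rho_x\in G$ with $\rho_x^{-1}g^N\rho_x$ within the required $\hat d$-tolerance of $F(x)$; set $g_0(x)=\rho_x$. (4) Propagate $g_0$ up the tower by solving, on $S_0^i(E_0)$, the equation $g_0(S_0^i(x))\,h(S_0^{i-1}(x))\cdots h(x) = g^i\,g_0(x)$ (literally defining $g_0$ there, no choice needed), and check the cocycle identity $g_0(x)h(x)=g\,g_0(S_0^{-1}(x))$ holds on all of $[0,1]$, which translates into $(g_0,S_0)$ conjugating $(C_g,S_0)$ into the approximation of $U\times W$ we want. (5) Measurability of $g_0$: the relevant graphs $P_i\subseteq S_0^i(E_0)\times G$ are Borel (finite conjunctions of conditions involving the lower-semicontinuous/Borel metrics on $G$ and the simple measurable $h$), so Jankov--von Neumann uniformization yields a measurable selector, i.e. a genuine $g_0\in L^0([0,1],G)$.

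The main obstacle is bookkeeping the $\epsilon$'s rather than any conceptual difficulty: in the $S_\infty$ case the metric $d_u$ is discrete ($0$ or $1$), so "agreeing on $n<K$" is an exact, clopen condition and products of permutations agree on coordinates cleanly; in a general Polish topometric group $G$ one only has approximate equality, so one must verify that the finitely many approximate equations propagated $N$ times up the tower accumulate only a controlled total error, and that this error — integrated over $[0,1]$ via $\hat d$ — stays below the tolerance defining $U$. Concretely one should choose the period $N$, the Rohklin approximation error, and the per-coordinate tolerance in the conjugation of $g^N$ so that $N$ times the local error plus $d_u(S,S_0)$-type contributions is $<\delta$; since $N$ is chosen after $\delta$ this is a standard (if slightly delicate) quantifier juggling. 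A second minor point to check is that $g^N$ having dense conjugacy orbit in $G$ really does let us approximate \emph{arbitrary} target elements $F(x)$ up to the needed tolerance — this is exactly what density of the orbit gives, but one should state it as: for every $\delta'>0$ and every $F\in G$ there is $\rho$ with $d_G(\rho^{-1}g^N\rho,F)<\delta'$, and then feed this into the Borel set $P$ for Jankov--von Neumann. Modulo these estimates the proof is the same as for $\tilde{S}_\infty$ with "$g^N$ has dense conjugacy orbit" substituted for "$\sigma^N$ is generic in $S_\infty$."
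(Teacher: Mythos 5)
You are following the paper's own route essentially step for step: the constant section $C_g$ paired with an aperiodic $S$, conjugating in $\mbox{Aut}([0,1])$ first, the Uniform Approximation/Rohklin tower $E_0,S_0(E_0),\dots,S_0^{N-1}(E_0)$, a Jankov--von Neumann selection on the base using density of the conjugacy class of $g^N$, and exact propagation up the tower. Two points, however, need repair before the verification closes. First, the paper's opening move is to fix a \emph{right-invariant} compatible metric $d_G$ on $G$ (harmless, since any bounded compatible metric describes the convergence-in-measure neighborhoods), and this is what makes everything work with a single $\varepsilon$: after the exact propagation, the only inequality left to check is the wrap-around one at the base, $d_G(g_0(x)^{-1}g\,g_0(S_0^{N-1}(x)),h(x))\le\varepsilon$, and unwinding the propagation turns its first argument into $g_0(x)^{-1}g^N g_0(x)\,w^{-1}$ with $w=h(S_0^{N-1}(x))\cdots h(S_0(x))$; right-invariance converts the required bound into exactly the condition $d_G(g_0(x)^{-1}g^N g_0(x),\,h(x)w)\le\varepsilon$ that density of the conjugacy class of $g^N$ (fed into the Borel set $P$) supplies. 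With an unspecified compatible metric this cancellation is unavailable (right translations need not be uniformly continuous), so your ``$\epsilon$-bookkeeping'' plan does not go through as stated, while with the right-invariant metric there is nothing to bookkeep. In particular the ``main obstacle'' you identify is illusory: since the extension up the tower is by exact equations (as you yourself note), errors do not accumulate at all --- each tower column carries exactly one $\varepsilon$, so the tolerance never needs to depend on $N$ and no quantifier juggling is required.

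Second, a concrete slip: your propagation rule $g_0(S_0^i(x))\,h(S_0^{i-1}(x))\cdots h(x)=g^i g_0(x)$ is inconsistent with your own cocycle identity $g_0(y)h(y)=g\,g_0(S_0^{-1}(y))$; at $i=1$ it gives $g_0(S_0(x))h(x)=g\,g_0(x)$, whereas the identity at $y=S_0(x)$ demands $g_0(S_0(x))h(S_0(x))=g\,g_0(x)$. The correct rule (the one in the paper's general argument) is $g_0(S_0^i(x))=g^i g_0(x)\,h(S_0(x))^{-1}h(S_0^2(x))^{-1}\cdots h(S_0^i(x))^{-1}$, with which the cocycle identity holds exactly on $S_0^i(E_0)$ for $1\le i\le N-1$; accordingly, the word that $\rho_x^{-1}g^N\rho_x$ must approximate at the base is $h(x)\,h(S_0^{N-1}(x))\cdots h(S_0(x))$ rather than $h(S_0^{N-1}(x))\cdots h(x)$. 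With these two corrections your argument is the paper's proof.
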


Instead of doing the proof again in the general setting, we point out how to modify the previous proof.

\begin{proof}
We choose a right-invariant complete metric on $G$, and call it $d_G$.

Choose $\sigma\in G$ that witnesses the Rohklin property under powers, and $S\in Aut([0,1])$ aperiodic. As in the previous proof, we will show that the tuple $(C_\sigma,S)$ has a dense orbit. Let $U\times V$ be open in $\tilde{G}$. Once $T$ has been chosen, we can choose $h\in TU$, and choose $\varepsilon>0$ such that

\[\{f\in \Lcero| \mu(\{x\in [0,1]| d_G(f(x),h(x))<\varepsilon\})\geq 1-\varepsilon\}\subseteq TU.\]

Notice that the choices of $S_0$ and $E_0$ in the proof for $S_\infty$ do not depend at all on $\Lcero$. 
Thus we can choose them in the same way. As before, the idea is to define $g(x)$ for $x$ in $E_0$, and then just expand it to all of the interval. 

However, we no longer have an initial segment of the positive integers to do so. Instead, we want the following inequality to hold for all $x$ in $\cup_{i<n}S^n_0(E_0)$.

\[d_G(g(x)^{-1}\sigma g(S_0^{-1}(x)),h(x))<\varepsilon.\]

With the same idea in mind as in the other proof, we define the Borel relation $P\subseteq E_0\times G$ by:
\[(x,\rho)\in P \Leftrightarrow d(\rho^{-1}\sigma^n\rho, h(x)h(S_0^{N-1}(x))\ldots h(S_0(x)))\leq \varepsilon.\]

Notice this set is not empty, since $G$ has the Rohklin property under powers. Thus, it has a measurable uniformization. We define $g:E_0\to G$ to be this uniformization. We expand it to all of $\cup_{i<n}S^n_0(E_0)$ by the following equation.
\[g(S_0^i(x))=\sigma^ig(x)h(S_0(x))^{-1}h(S_0^2(x))^{-1}\ldots h(S_0^{i}(x))^{-1}.\]

We claim that $g$ so defined satisfies that for all $x\in \cup_{i<n}S^n_0(E_0)$ 

\[d_G(g(x)^{-1}\sigma g(S_0^{-1}(x)),h(x))\leq \epsilon.\]

The calculation is straightforward, but as an illustration of what is happening, let us show it for $x\in E_0$. 

\[\begin{array}{rl}
	d(g(x)^{-1}\sigma g(S_0^{-1}(x)),h(x))=& d(g(x)^{-1}\sigma g(S_0^{N-1}(x),)h(x))\\
	=&d(g(x)^{-1}\sigma g(x)\sigma^{N-1}h(S_0(x))^{-1}\ldots h(S_0^{N-1}(x))^{-1},h(x)) \\
	=&d(g(x)^{-1}\sigma^{N}g(x),h(x) h(S_0^{N-1}(x))\ldots h(S_0(x))) \\
	\leq& \varepsilon
\end{array}
\]

Since the last claim is valid for all $x\in \cup_{i<n}S^n_0(E_0)$, and this set has measure at least $1-\varepsilon$, 
then $(C_\sigma,T)$ has a dense orbit.
\end{proof}

\begin{Pro} The group $G=\Aut$ has the Rohklin property under powers. In particular, $\tilde{G}$ has the Rohklin property.
\end{Pro}

\begin{proof}
Notice that if $T$ is aperiodic, then so is $T^n$ for any $n\neq0$.
\end{proof}

\begin{Pro} Let $\mathcal{U}$ be the group of unitary transformations on a separable Hilbert space. Then $\mathcal{U}$ has then Rohklin property under powers and $\tilde{\mathcal{U}}$ has the Rohklin property.
\end{Pro}

\begin{proof}
Notice that $T\in \mathcal{U}$ is generic when its spectrum $\sigma(T)=S^1$. Then for any $n\geq 1$,  
$\sigma(T^n)=S^1$, so $T^n$ is also generic.
\end{proof}

\begin{Pro} The group $G=Aut(\mathbb Q,\leq)$ has the Rohklin property under powers. In particular, $\tilde{G}$ has the Rohklin property.
\end{Pro}

\begin{proof} We use the characterization of the generic element found in \cite{T}. Given $g\in G$ and $x\in\mathbb{Q}$, define the orbital of $x$ by $g$ as the following set:
$$\mbox{obt}(x,g):=\{y\in\mathbb{Q}| \exists m,n \in \mathbb{Z}\, g^n(x)\leq y\leq g^m(x)\}.$$
and the sign of this orbital as:
$$\mbox{sgn}(x,g):=\left\{\begin{array}{rl}
	1 &\mbox{if } x< g(x),\\
	-1 &\mbox{if } x> g(x),\\
	0 &\mbox{if } x= g(x).
\end{array}\right.$$

Notice that both $\mbox{obt}(x,g)=\mbox{obt}(x,g^n)$ and $\mbox{sgn}(x,g)=\mbox{sgn}(x,g^n)$ for any $n$. 

In \cite{T}, Truss showed that $g$ is generic if the set of orbitals with sign $\epsilon$ is a dense linear order without endpoints and is dense in the union of the other two. Thus, if $g$ is generic, $g^n$ is generic.

\end{proof}

\section{Transferring the Rohklin property to $\tilde{G}$}\label{Den2}

In this section we prove that the Rohklin property transfers from $G$ to $\tilde{G}$. Instead of the algebraic 
approach from the previous section we follow a topological approach that works for all Polish groups.

\begin{thm} If $G$ has the Rohklin property, then so does $\tilde{G}$.
\end{thm}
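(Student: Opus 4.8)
The plan is to prove the stronger fact that the conjugation action of $\tilde G$ on itself is topologically transitive, and then read off the theorem from a standard Baire category argument: for a continuous action of a Polish group on a second countable Baire space, topological transitivity forces the set of points with dense orbit to be a dense $G_\delta$; applied to the conjugation action of the Polish group $\tilde G$ on itself, this yields a comeager set of elements with dense conjugacy class, which is exactly the Rohklin property. By Section \ref{uct} the pointwise convergence topology on $\tilde G=L^0([0,1],G)\rtimes\Aut$ is the product of the topology of convergence in measure on $L^0([0,1],G)$ and the weak topology on $\Aut$, so it suffices to verify topological transitivity on basic boxes: given nonempty open $U_1\times V_1$ and $U_2\times V_2$, I must produce $(f,S)\in U_1\times V_1$ and $(h,R)\in\tilde G$ with $(h,R)(f,S)(h,R)^{-1}\in U_2\times V_2$.

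First I would dispose of the $\Aut$-coordinate. Since $\Aut$ has the Rohklin property -- by Halmos, every aperiodic measure preserving transformation already has a dense conjugacy class -- I may pick an aperiodic $S\in V_1$ together with an $R\in\Aut$ such that $S':=RSR^{-1}\in V_2$; aperiodicity of $S'$ is what makes Rokhlin's lemma available below. A direct computation in the semidirect product gives the conjugation formula
\[
(h,R)(f,S)(h,R)^{-1}=\Bigl(\ \omega\longmapsto h(\omega)\,f\bigl(R^{-1}(\omega)\bigr)\,h\bigl((S')^{-1}(\omega)\bigr)^{-1}\ ,\ S'\ \Bigr),
\]
so whatever $h$ I choose, the $\Aut$-coordinate of the conjugate is $S'\in V_2$, and it remains only to steer its $L^0$-coordinate into $U_2$.

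Fix any $f\in U_1$ and let $f_2$ be a center of $U_2$, so that $U_2$ contains every $g\in L^0([0,1],G)$ agreeing with $f_2$ off a set of measure $<\delta$, for some $\delta>0$. I would then solve the cocycle-type equation $h(\omega)\,f(R^{-1}(\omega))\,h((S')^{-1}(\omega))^{-1}=f_2(\omega)$ on a Rokhlin tower for $S'$: using aperiodicity of $S'$, choose $N$ with $1/N<\delta/2$ and a measurable $E$ with $E,S'(E),\dots,(S')^{N-1}(E)$ pairwise disjoint and of total measure $>1-\delta/2$; set $h\equiv e$ on $E$ and on the complement of the tower, and define, for $x\in E$ and $1\le j\le N-1$,
\[
h\bigl((S')^{j}(x)\bigr):=f_2\bigl((S')^{j}(x)\bigr)\ h\bigl((S')^{j-1}(x)\bigr)\ f\bigl(R^{-1}((S')^{j}(x))\bigr)^{-1}.
\]
This recursion is explicit, so $h$ is measurable and lies in $L^0([0,1],G)$, and by construction the displayed equation holds at $\omega=(S')^{j}(x)$ for every $x\in E$ and every $1\le j\le N-1$, hence at every $\omega$ in the tower outside its bottom layer $E$ -- that is, off a set of measure $<1/N+\delta/2<\delta$. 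Therefore the $L^0$-coordinate of $(h,R)(f,S)(h,R)^{-1}$ lies in $U_2$ while its $\Aut$-coordinate is $S'\in V_2$, and $(f,S)\in U_1\times V_1$; this is topological transitivity, so the theorem follows. If an explicit witness in the spirit of ``transferring'' is wanted, one can run the same conjugation on $(C_g,S)$ with $g\in G$ a Rohklin element and $S\in\Aut$ aperiodic; the Rohklin property of $G$ is, however, not really used in the argument beyond $G$ being Polish.

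I expect the only genuine obstacle to be bookkeeping rather than conceptual: the equation couples $h(\omega)$ with $h((S')^{-1}(\omega))$, so it has to be unwound along the $S'$-orbit in a single direction, and one must keep careful track of the one layer of each Rokhlin column that is left uncontrolled and confirm that those layers together carry only $\delta$-much measure. The point that might look delicate -- measurability of $h$ -- is here immediate from the explicit recursion, but in a softer presentation it would be obtained, exactly as in the proofs of Section \ref{Den}, by a Jankov--von Neumann uniformization of the Borel relation describing the admissible values of $h$ on each layer.
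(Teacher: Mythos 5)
Your argument is correct, but it is not the route the paper takes for this theorem. The paper's proof is soft: it reduces, as you do, to topological transitivity of the conjugation action on product boxes, but it then splits the conjugation into two steps, first by $(C_e,T)$ to handle the $Aut([0,1])$-coordinate (via Halmos, exactly as in your argument) and then by $(f,Id)$, invoking the Ka\"ichouh--Le Ma\^itre transfer of a dense conjugacy class from $G$ to $L^0([0,1],G)$; this is the only place the hypothesis on $G$ enters, and the modularity of that decomposition is what yields the paper's corollary that the Rohklin property passes to arbitrary semidirect products $G\rtimes H$. You instead solve the twisted conjugacy (cocycle) equation $h(\omega)f(R^{-1}\omega)h((S')^{-1}\omega)^{-1}=f_2(\omega)$ directly on a Rokhlin tower for $S'$, leaving only the base level and the complement of the tower uncontrolled; this is essentially the tower technique of Section \ref{Den}, but since you never close the equation around a periodized orbit, no consistency condition of the form ``$\sigma^N$ has dense conjugacy class'' (the Rohklin property under powers) appears. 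Two consequences are worth making explicit. First, as you note, the hypothesis that $G$ has the Rohklin property is never used, so your construction proves the stronger statement that $\tilde{G}$ has the Rohklin property for \emph{every} Polish $G$; you should state this as the theorem you are actually proving. Second, your bookkeeping keeps track of the genuine form of conjugation by $(f,Id)$, namely $(g,S)\mapsto (f\,g\,(f^{-1}\circ S^{-1}),S)$ rather than $(fgf^{-1},S)$, a twist that the paper's coordinatewise step (deducing $(U_3\times V_3)^{(f,Id)}\cap(U_2\times V_3)\neq\emptyset$ from $U_3^{f}\cap U_2\neq\emptyset$) passes over quickly and which really requires an approximation of exactly the kind you carry out. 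What the paper's softer approach buys in exchange is generality of the formal argument (the same few lines give the semidirect-product corollary for groups with no measure theory in sight), which your construction, being tied to Rokhlin's lemma and the $L^0$ structure, does not provide.
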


\begin{proof}
Recall that the Polish topology in $\tilde{G}$ is the product topology, so it suffices to show that for any
non-empty open sets $U_1, U_2\subseteq L^0([0,1],G)$ and $V_1,V_2\subseteq \Aut$ there is
$(f,T)\in \tilde{G}$ such that $(U_1\times V_1)^{(f,T)}\cap (U_2\times V_2)\neq \emptyset$.

So assume we have such sets. Since $\Aut$ has a dense orbit,
we can find $T\in \Aut$ such that $V_1^T\cap V_2\neq \emptyset$. Since conjugation 
is a homeomorphism of the space $\tilde{G}$, $W_1=(U_1\times V_1)^{(C_e,T)}$ is open in $\tilde{G}$. 

Notice that the projection on the second component of $W_1$ coincides with $V_1^T$ and intersects $V_2$. Thus we can find
open sets $U_3\subseteq L^0([0,1],G)$ and $V_3\subseteq \Aut$ such that $U_3\times V_3\subseteq W_1$
and $V_3\subseteq V_2$. 

Since $G$ has a dense orbit, $L^0([0,1],G)$ also has a dense orbit (see \cite{KL}). Thus we can find $f\in L^0([0,1],G)$ such that $U_3^f\cap U_2\neq \emptyset$. This proves that 
$(U_3\times V_3)^{(f,Id)}\cap (U_2\times V_3)\neq \emptyset$ and so we obtain that
$(U_3\times V_3)^{(f,Id)}\cap (U_2\times V_2)\neq \emptyset$. Likewise,
$(W_1)^{(f,Id)}\cap (U_2\times V_2)\neq \emptyset$ and thus $((U_1\times V_1)^{(C_e,T)})^{(f,Id)}\cap (U_2\times V_2)\neq \emptyset$.
\end{proof}

From the proof, we obtain some corollaries.

\begin{Cor}Let $H,G$ are topological groups such that $H$ acts continuously on $G$. If $H$ and $G$ have the Rohklin property, so does $G\rtimes H$. 
\end{Cor}

\begin{Cor} Consider the action of $G$ on $G^n$ by diagonal conjugation, that is, 
each $g\in G$ sends $(g_1,\dots,g_n)$ to $(g_1^g,\dots,g_n^g)$.  Assume $G^n$ has a dense orbit under diagonal conjugation, then so does $\tilde{G}^n$. 
\end{Cor}

\begin{proof}
The proof is the same as before but now we consider open subsets $U_1, U_2\subseteq L^0([0,1],G)^n$
and $V_1,V_2\subseteq (\Aut)^n$ and use the fact that $\Aut$ has ample metric generics and that 
if $G^n$ has a dense orbit under diagonal conjugation so does $L^0([0,1],G)^n$ (see \cite{KL}).
\end{proof}

Below we will use the following notation. We write $d_u$ for the metric of uniform convergence
for $G$, $\hat d_u$ for the induced metric of uniform convergence in $L^0([0,1],G)$, $L_u$ for the metric
of uniform convergence (see Definition \ref{defnunifconv}) in $\tilde{G}$. For $B\subseteq G$, the set
$\overline{B}^{d_u}$ stands for its closure with 
respect to the metric $d_u$ and for $A\subseteq \tilde{G}$, $\overline{A}^{L_u}$ stands for its closure with 
respect to the metric $L_u$.

\begin{Pro}\label{orbitaperiodic} Let $h \in G$, let $C_h:[0,1]\to G$ be the function with constant value $h$
 and let $T\in \Aut$ be aperiodic. Then $\overline{(C_h,T)^{\tilde{G}}}^{L_u} \supseteq \{(C_h,S): S$ is aperiodic $\}$. 
\end{Pro}

\begin{proof}
Let $S$ be aperiodic and $\epsilon>0$. By Rokhlin's Lemma  there is $R\in \Aut$  such that
$\mu\{\omega\in [0,1]: R^{-1}TR(\omega)\neq S(\omega)\}<\epsilon$. Note that since $C_h$ is a constant function 
the action by $R$ on $C_h$ is trivial. Thus by Theorem \ref{unifmetric}
we get that $L_u((C_h,T)^{(C_e,R)},(C_h,S))=L_u((C_h,R^{-1}TR),(C_h,S))<\epsilon$.
\end{proof}

\begin{Obse}\label{doubleconj} Let $f,h \in L^0([0,1],G)$, let $R,T\in \Aut$ and assume that 
$(f,R)\in \overline{(h,T)^{\tilde{G}}}^{L_u}$. Then whenever 
$(f_1,R_1)\in (f,R)^{\tilde{G}}$ we also have $(f_1,R_1)\in \overline{(h,T)^{\tilde{G}}}^{L_u}$.
\end{Obse}

\begin{proof}
Let $\epsilon>0$ and let $(g,S)\in \tilde{G}$ be such that 
$L_u((f,R),(h,T)^{(g,S)})<\epsilon$. Also let $(f_2,R_2)\in \tilde{G}$ be such that 
$(f_1,R_1)=(f,R)^{(f_2,R_2)}$ Since $L_u$ is biinvariant, 
$L_u((f_1,R_1),((h,T)^{(g,S)})^{(f_2,R_2)})=L_u((f,R)^{(f_2,R_2)},((h,T)^{(g,S)})^{(f_2,R_2)})<\epsilon$
as we wanted.
\end{proof}

\begin{thm} If $(G,\tau,d_u)$ has metric generics, then so does $\tilde{G}$.
\end{thm}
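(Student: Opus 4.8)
The plan is to combine three ingredients already available in the excerpt: (i) Theorem \ref{transfmetL}, which says that if $(G,d,d_u)$ has metric generics witnessed by $g$, then the constant function $C_g$ witnesses metric generics in $(L^0([0,1],G),\hat d,\hat d_u)$; (ii) the characterization of the uniform metric $L_u$ on $\tilde G$ from Theorem \ref{unifmetric} (and the remark after it) as being uniformly equivalent to the product of $\hat d_u$ on $L^0([0,1],G)$ and $\Delta_u$ on $\Aut$; and (iii) the fact, used repeatedly in Section \ref{Den2}, that $\Aut$ has metric generics witnessed by an aperiodic $T$, with $\overline{T^{\Aut}}^{\Delta_u}$ comeager. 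The candidate metric generic element of $\tilde G$ should be $(C_g,T)$ where $g$ is a metric generic of $G$ and $T\in\Aut$ is aperiodic with comeager $\Delta_u$-closure of its conjugacy orbit.

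First I would fix $g\in G$ with $S_G=\overline{g^{G}}^{d_u}$ comeager, and an aperiodic $T\in\Aut$ with $S_{\Aut}=\overline{T^{\Aut}}^{\Delta_u}$ comeager. By Theorem \ref{transfmetL}, $S_L=\overline{C_g^{L^0([0,1],G)}}^{\hat d_u}$ is comeager in $L^0([0,1],G)$; in particular it contains the set $F=\{f : f(\omega)\in S_G \text{ a.e.}\}$, which contains a dense $G_\delta$. Since the Polish topology of $\tilde G$ is the product topology, $S_L\times S_{\Aut}$ is comeager in $\tilde G$ (product of comeager sets, using Kuratowski--Ulam in the other direction). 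So it suffices to show $S_L\times S_{\Aut}\subseteq \overline{(C_g,T)^{\tilde G}}^{L_u}$.

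The core step is the approximation: given $(f,R)$ with $f\in S_L$ and $R\in S_{\Aut}$ and given $\epsilon>0$, I want $(u,V)\in\tilde G$ with $L_u((f,R),(C_g,T)^{(u,V)})<\epsilon$. I would do this in two moves, exploiting that the two factors interact well. First, since $R\in\overline{T^{\Aut}}^{\Delta_u}$, pick $V\in\Aut$ with $\Delta_u(V^{-1}TV,R)<\epsilon/2$; note (as in Proposition \ref{orbitaperiodic}) that conjugating $(C_g,T)$ by $(C_e,V)$ gives $(C_g,V^{-1}TV)$ because $C_g$ is constant so the $\Aut$-action on it is trivial. Second, now I need to adjust the $L^0$-coordinate: find $u\in L^0([0,1],G)$ with $\hat d_u(f, u^{-1}C_g u)<\epsilon/2$ — this is exactly the content of the proof of Theorem \ref{transfmetL} (for $\omega$ a.e., $f(\omega)\in S_G$, so there is $h_\omega$ with $d_u(f(\omega),h_\omega^{-1}g h_\omega)\le\epsilon/2$; lower semicontinuity of $d_u$ makes this Borel, and Jankov--von Neumann gives a measurable selector $u$). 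Conjugating $(C_g,V^{-1}TV)$ by $(u, \mathrm{Id})$ keeps the $\Aut$-coordinate equal to $V^{-1}TV$ and changes the $L^0$-coordinate to $u^{-1}C_g u$ (one must check the semidirect product multiplication here — the twist by $V^{-1}TV$ on $u$ does not matter for the $\Aut$-coordinate, and one can absorb it into the choice of $u$ since $V^{-1}TV$ is measure-preserving, $\hat d_u$ is invariant under precomposition with $\Aut$). Then by Theorem \ref{unifmetric} (upper bound $L_u\le \mu(B)+\int_A d_u \le \hat d_u(\text{$L^0$-coord difference}) + \Delta_u(\text{$\Aut$-coord difference})$ applied to the product of the two automorphisms), $L_u((f,R),(C_g,T)^{(u,\mathrm{Id})(C_e,V)})<\epsilon/2+\epsilon/2=\epsilon$.

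The main obstacle I expect is bookkeeping in the semidirect product: making sure that the two conjugations ($(C_e,V)$ for the $\Aut$-part and $(u,\mathrm{Id})$ for the $L^0$-part) can be performed without the twisted action of $\Aut$ on $L^0([0,1],G)$ spoiling the estimates — concretely, that conjugating by $(C_e,V)$ really does leave $C_g$ untouched (clear, since $C_g$ is constant) and that the residual $\Aut$-twist appearing when we further conjugate by $(u,\mathrm{Id})$ only reparametrizes $u$ by a measure-preserving map, under which $\hat d_u$ is invariant, so the target estimate $\hat d_u(f,\cdot)<\epsilon/2$ is unaffected. Equivalently, one can simply conjugate $(C_g,T)$ by the single element $(u,V)$ and unwind the product formula once and for all; the upper bound in Theorem \ref{unifmetric} then delivers the result. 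Everything else (comeagerness of $S_L\times S_{\Aut}$, measurability of the selector) is routine given the cited results.
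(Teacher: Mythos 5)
Your reduction (show $S_L\times S_{\mathrm{Aut}}\subseteq\overline{(C_g,T)^{\tilde G}}^{L_u}$ for comeager $S_L$, $S_{\mathrm{Aut}}$, then approximate the two coordinates by two successive conjugations) is a sensible plan, and the first conjugation is correct: conjugating $(C_g,T)$ by $(C_e,V)$ does give $(C_g,V^{-1}TV)$ because $C_g$ is constant. The gap is in the second step. Writing $S'=V^{-1}TV$ and unwinding the semidirect product multiplication $(f,T)(h,S)=(f\cdot(h\circ T^{-1}),TS)$, one gets
\[
(u,\mathrm{Id})^{-1}(C_g,S')(u,\mathrm{Id})=\bigl(\,\omega\mapsto u(\omega)^{-1}\,g\,u(S'^{-1}\omega),\ S'\,\bigr),
\]
not $(u^{-1}C_gu,\,S')$: the function $u$ is evaluated at the two different points $\omega$ and $S'^{-1}\omega$, and this mismatch cannot be ``absorbed into the choice of $u$''. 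Invariance of $\hat d_u$ under precomposition with a measure preserving map lets you replace $u$ by $u\circ W$ in \emph{both} occurrences simultaneously, which leaves the mismatch intact. Quantitatively, by bi-invariance of $d_u$,
\[
\hat d_u\bigl(u^{-1}C_gu,\ u^{-1}\cdot C_g\cdot(u\circ S'^{-1})\bigr)=\int_0^1 d_u\bigl(u(\omega),u(S'^{-1}\omega)\bigr)\,d\omega ,
\]
and the Jankov--von Neumann selector $u$ from the proof of Theorem \ref{transfmetL} gives no control whatsoever on this quantity; if you tried to force it to be small, $u$ would have to be almost $S'$-invariant, and such a $u$ cannot in general make $u^{-1}C_gu$ approximate an arbitrary $f\in S_L$ (for instance a two-valued step function whose two values are $d_u$-far apart). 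So the concluding estimate $L_u\bigl((f,R),(C_g,T)^{(u,\mathrm{Id})(C_e,V)}\bigr)<\epsilon$ does not follow from what you have established, and this twisted conjugation is precisely the mathematical heart of the theorem rather than ``bookkeeping''.

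The approximation you actually need -- for aperiodic $S'$ and given $f$, find $u$ with $\hat d_u\bigl(f,\ \omega\mapsto u(\omega)^{-1}g\,u(S'^{-1}\omega)\bigr)$ small -- is true, but it requires a construction: solve the cocycle equation $u(\omega)=g\,u(S'^{-1}\omega)f(\omega)^{-1}$ recursively up a Rokhlin tower for $S'$, which yields exact equality off the base of the tower; this is exactly the mechanism of the algebraic argument in Section \ref{Den}, transposed to the metric $\hat d_u$. The paper's own proof of the theorem is organized differently: it takes $(C_g,T)$ with $T$ aperiodic, uses Proposition \ref{orbitaperiodic} to place $(C_g,S)$ in $O=\overline{(C_g,T)^{\tilde G}}^{L_u}$ for every aperiodic $S$, uses Observation \ref{doubleconj} to conjugate further inside $O$ by elements $(h,\mathrm{Id})$, and then applies Kuratowski--Ulam to reduce to comeagerness of the fibers over aperiodic $S$, invoking Section \ref{KlM3} and Theorem \ref{unifmetric}. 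Either route is acceptable, but in both of them the passage from pointwise conjugation in $L^0([0,1],G)$ to conjugation inside $\tilde G$ over a nontrivial second coordinate must be confronted directly; your ``reparametrization by a measure-preserving map'' shortcut does not do this.
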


\begin{proof}
We will use again that the Polish topology in $\tilde{G}$ is the product topology
and that the uniform convergence topology is also a product topology (see Theorem \ref{unifmetric}). 
Let $g\in G$ be such that 
$\overline{g^G}^{d_u}$ is comeager, so $g$ is a metric generic. Let $T\in \Aut$ be aperiodic, so it is a metric generic in the space of invertible measure preserving transformation. We will prove that $(C_g,T)$ is a metric generic in $\tilde{G}$.

Notice that $(C_g,T)^{\tilde{G}}$ is Borel as well as $O=\overline{(C_g,T)^{\tilde{G}}}^{L_u}$. By Proposition
\ref{orbitaperiodic}, for $S$ in the comeager subset $\{S\in \Aut:S$ is aperiodic$\}$ of $\Aut$, $(C_g,S)\in O$. 
Thus by Kuratowski-Ulam and Observation \ref{doubleconj}, it suffices to check that the fibers 
$\pi_1\{\overline{(C_g,S)^{(f,Id)}}^{Lu}:f\in L^0([0,1],G)\}$ are comeager for all such $S$.

First notice that by Theorem \ref{unifmetric} and Section \ref{KlM3}, the set
$F=\overline{C_g^{L^0([0,1],G)}}^{\hat d_u}$ is comeager in $L^0([0,1],G)$. Now consider $\{(C_g,S)^{(f,Id)}:f\in L^0([0,1],G)\}$. Then by Theorem \ref{unifmetric} we get that $$\overline{\{(C_g,S)^{(f,Id)}:f\in L^0([0,1],G)\}}^{L_u}=(\overline{\{C_g^{f}:f\in L^0([0,1],G)\}}^{\hat d_u},S)=(F,S)$$ 
The result follows from the fact that $\{S\in \Aut:S$ is aperiodic$\}$ and $F$ are comeager.
\end{proof}

\begin{thm} Assume $(G,\tau,d_u)$ has metric ample generics, then so does $\tilde{G}$.
\end{thm}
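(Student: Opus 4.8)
The plan is to mimic the proof of the metric generics case, upgrading everything from single elements to $n$-tuples, since all the ingredients used there have $n$-tuple analogues available in the excerpt. Fix $n\geq 1$. Choose $\vec g=(g_1,\dots,g_n)\in G^n$ with $\overline{\mathrm{Orb}(\vec g)}^{d_u}$ comeager in $G^n$ (this exists by hypothesis), and choose an $n$-tuple $\vec T=(T_1,\dots,T_n)\in(\Aut)^n$ which is a metric generic $n$-tuple in $\Aut$ for the diagonal conjugacy action (this is available since $\Aut$ has metric ample generics, as noted in the introduction and in \cite{BBM}). Set $C_{\vec g}=(C_{g_1},\dots,C_{g_n})\in L^0([0,1],G)^n$. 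The claim is that $(C_{\vec g},\vec T):=((C_{g_1},T_1),\dots,(C_{g_n},T_n))\in\tilde G^n$ is a metric generic $n$-tuple, where $\tilde G$ carries the product Polish topology and the product uniform metric $L_u$, both characterized in Section \ref{uct}.

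The key steps, in order, are as follows. First, record that diagonal conjugation on $\tilde G^n=(L^0([0,1],G)\rtimes\Aut)^n$ respects the product structure, so that an element $(h_1,\dots,h_n)$ lies in the $L_u$-closure of the diagonal orbit of $(C_{\vec g},\vec T)$ iff (after conjugating by $(C_e,R)$ with $R$ acting diagonally and trivially on constants) one can separately approximate in $L^0([0,1],G)^n$ and in $(\Aut)^n$; this is the $n$-tuple version of the reasoning behind Observation \ref{doubleconj} and the earlier theorems, using biinvariance of $L_u$ and Theorem \ref{unifmetric}. Second, apply Rokhlin's lemma simultaneously (or iteratively, refining) to pass from an arbitrary aperiodic tuple $\vec S$ to a conjugate of $\vec T$ up to uniform error, giving the $n$-tuple analogue of Proposition \ref{orbitaperiodic}: $(C_{\vec g},\vec S)$ lies in the $L_u$-closure of the diagonal orbit of $(C_{\vec g},\vec T)$ whenever $\vec S$ is a tuple of aperiodic transformations forming (essentially) a conjugate of $\vec T$ — more precisely, whenever $\vec S$ is in the $\Delta_u$-closure of the diagonal orbit of $\vec T$ in $(\Aut)^n$, which is comeager. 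Third, invoke Kuratowski–Ulam on the product $L^0([0,1],G)^n\times(\Aut)^n$: it suffices to show that for each $\vec S$ in a comeager subset of $(\Aut)^n$, the fiber $\pi_1\big(\overline{\{(C_{\vec g},\vec S)^{(\vec f,\vec{Id})}:\vec f\in L^0([0,1],G)^n\}}^{L_u}\big)$ is comeager in $L^0([0,1],G)^n$. Fourth, by the product description of $L_u$ (Theorem \ref{unifmetric}) that fiber equals $\overline{\mathrm{Orb}(C_{\vec g})}^{\hat d_u}$ computed in $L^0([0,1],G)^n$ for the diagonal conjugacy action; and by the ample-generics transfer theorem of Section \ref{KlM3} (the statement right before Section \ref{uct}), together with the results of \cite{KL}, this closure is comeager in $L^0([0,1],G)^n$. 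Combining with comeagerness of $\{\vec S\in(\Aut)^n:\vec S\in\overline{\mathrm{Orb}(\vec T)}^{\Delta_u}\}$ finishes the argument.

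The step I expect to be the genuine obstacle is the simultaneous Rokhlin/Halmos uniform-approximation argument needed for the $n$-tuple version of Proposition \ref{orbitaperiodic}: for a single $T$ one just approximates $T$ by a periodic map and then conjugates, but for a commuting-free tuple $(T_1,\dots,T_n)$ one must find a single $R$ with $\mu\{\omega: R^{-1}T_iR(\omega)\neq S_i(\omega)\}<\epsilon$ for all $i\leq n$ at once. This is exactly the statement that $\Aut$ has metric ample generics (the aperiodicity/genericity of a tuple under $\Delta_u$), which is known from \cite{BBM}; so rather than re-proving it I would simply cite it and phrase Proposition \ref{orbitaperiodic}'s $n$-tuple analogue as: if $\vec S$ lies in the $\Delta_u$-closure of the diagonal $\Aut$-orbit of $\vec T$, then $(C_{\vec g},\vec S)$ lies in the $L_u$-closure of the diagonal $\tilde G$-orbit of $(C_{\vec g},\vec T)$, with proof identical to the $n=1$ case modulo replacing every object by its tuple. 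The remaining steps are then routine transcriptions of the metric-generics proof, so I would write the proof tersely, pointing to the previous theorem and indicating only the places where "tuple" replaces "element".

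\begin{proof}
Fix $n\geq 1$. Since $(G,\tau,d_u)$ has metric ample generics, pick $\vec g=(g_1,\dots,g_n)\in G^n$ with $\overline{\mathrm{Orb}(\vec g)}^{d_u}$ comeager in $G^n$ under diagonal conjugation. Since $\Aut$ has metric ample generics (see \cite{BBM}), pick $\vec T=(T_1,\dots,T_n)\in(\Aut)^n$ with $\overline{\mathrm{Orb}(\vec T)}^{\Delta_u}$ comeager in $(\Aut)^n$ under diagonal conjugation. Let $C_{\vec g}=(C_{g_1},\dots,C_{g_n})$ and put $(C_{\vec g},\vec T)=((C_{g_1},T_1),\dots,(C_{g_n},T_n))\in\tilde G^n$. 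We show this is a metric generic $n$-tuple.

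Exactly as in Proposition \ref{orbitaperiodic}, using that the action of a transformation on a constant function is trivial and applying Rokhlin's Lemma to approximate simultaneously, one gets the $n$-tuple version:
\[
\overline{\mathrm{Orb}_{\tilde G}(C_{\vec g},\vec T)}^{L_u}\ \supseteq\ \{(C_{\vec g},\vec S):\vec S\in\overline{\mathrm{Orb}_{\Aut}(\vec T)}^{\Delta_u}\}.
\]
Here the right-hand index set is comeager in $(\Aut)^n$. Moreover the $n$-tuple version of Observation \ref{doubleconj}, using biinvariance of $L_u$, shows that for such $\vec S$ the whole diagonal $\tilde G$-orbit of $(C_{\vec g},\vec S)$ lies in $\overline{\mathrm{Orb}_{\tilde G}(C_{\vec g},\vec T)}^{L_u}$.

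The orbit $\mathrm{Orb}_{\tilde G}(C_{\vec g},\vec T)$ and $O=\overline{\mathrm{Orb}_{\tilde G}(C_{\vec g},\vec T)}^{L_u}$ are Borel. Recall that the Polish topology on $\tilde G^n$ is the product topology and, by Theorem \ref{unifmetric}, the uniform topology is the product of the uniform topologies on $L^0([0,1],G)^n$ and $(\Aut)^n$. By Kuratowski--Ulam and the previous paragraph, it suffices to check that for each $\vec S$ in the comeager set $\{\vec S:\vec S\in\overline{\mathrm{Orb}_{\Aut}(\vec T)}^{\Delta_u}\}$, the fiber
\[
\pi_1\Big(\overline{\{(C_{\vec g},\vec S)^{(\vec f,\vec{Id})}:\vec f\in L^0([0,1],G)^n\}}^{L_u}\Big)
\]
is comeager in $L^0([0,1],G)^n$. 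By Theorem \ref{unifmetric} this fiber equals $\overline{\mathrm{Orb}(C_{\vec g})}^{\hat d_u}$ in $L^0([0,1],G)^n$ under diagonal conjugation, which is comeager by the transfer of metric ample generics proved in Section \ref{KlM3} together with \cite{KL}. Since both $\{\vec S:\vec S\in\overline{\mathrm{Orb}_{\Aut}(\vec T)}^{\Delta_u}\}$ and this last closure are comeager, we conclude that $O$ is comeager, i.e.\ $(C_{\vec g},\vec T)$ is a metric generic $n$-tuple. As $n$ was arbitrary, $\tilde G$ has metric ample generics.
\end{proof}
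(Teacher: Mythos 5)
Your proof is correct and is essentially the paper's own argument: the paper proves this theorem by the two-line remark that one picks a metric generic $n$-tuple $\vec g$ in $G^n$ and a metric generic $n$-tuple $\vec T$ in $\Aut^n$ and then repeats the single-element proof verbatim for $(C_{\vec g},\vec T)$, which is exactly what you do, with the tuple versions of Proposition \ref{orbitaperiodic}, Observation \ref{doubleconj}, Kuratowski--Ulam and the Section \ref{KlM3} transfer spelled out. Your replacement of ``aperiodic'' by ``$\vec S$ in the $\Delta_u$-closure of the diagonal orbit of $\vec T$'' is the right tuple formulation and matches the paper's intent.
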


\begin{proof}
The proof is very similar to the one of the previous theorem. 
For each $n\geq 1$ let $\vec g=(g_1,\dots,g_n)\in G^n$ be such that $\overline{{\vec g}^G}^{d_u}$ is comeager, where the action by $G$ is given by diagonal conjugation. 
Let $\vec T=(T_1,\dots,T_n)\in \Aut^n$ be a tuple such that $\overline{{\vec T}^{Aut([0,1])}}^{\Delta_u}$ is comeager, where again the action is given by diagonal conjugation. Then $(C_{g_1},\dots,C_{g_n},T_1,\dots,T_n)$ 
is a metric generic under the action by diagonal conjugation by $\tilde{G}$.
\end{proof}

\section{Extreme amenability}\label{EAm}

A topological group $G$ is \textbf{extremely amenable} if every action of $G$ on a compact space has a fixed point. Notice however that discrete groups $G$ have a free action on $\beta G$. Likewise, it can be shown that locally compact groups have a free action on some compact space. So this notion, although inspired in amenability on locally compact groups is only interesting on non-locally compact groups. 

\begin{thm}\label{Pestov} \begin{enumerate}
	\item (Pestov, Schneider, \cite{PS}) If $G$ is an amenable group, then $\Lcero$ is extremely amenable.
	\item (Giordano, Pestov, \cite{GP}) $\Aut$ is extremely amenable.
	\item (Pestov, see \cite{Pe}) Let $H'$ be a closed subgroup of a topological group $H$. If the topological groups $H'$ and $H/H'$ are extremely amenable, then so is $G$.
\end{enumerate}
\end{thm}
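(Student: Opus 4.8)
The three parts are independent facts drawn from the literature, so my plan is to prove each separately: parts (1) and (2) share a single analytic engine (realizing the group as a L\'evy group and invoking concentration of measure), while part (3) is a soft topological-dynamics argument. I would begin with part (3), which is the easiest. Suppose $H$ acts continuously on a nonempty compact space $X$; note that for $H/H'$ to be a topological group $H'$ must be closed and normal. Consider the fixed-point set $X^{H'}=\{x\in X: h'x=x \text{ for all } h'\in H'\}$. Extreme amenability of $H'$ makes $X^{H'}$ nonempty, and it is closed, hence compact. Normality of $H'$ forces $X^{H'}$ to be $H$-invariant, since for $x\in X^{H'}$, $h\in H$, $h'\in H'$ one computes $h'(hx)=h(h^{-1}h'h)x=hx$ because $h^{-1}h'h\in H'$. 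The induced $H$-action on the compact set $X^{H'}$ is trivial on $H'$ and therefore factors through a continuous action of $H/H'$; extreme amenability of $H/H'$ then supplies a point of $X^{H'}$ fixed by all of $H$. (As worded the conclusion should read ``then so is $H$''.)

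For part (1) I would follow the concentration-of-measure route, exhibiting the extreme amenability of $\Lcero$ as a L\'evy-group (equivalently, whirly amenability) phenomenon. The point is that amenability of $G$ supplies, for every finite $F\subseteq G$ and $\delta>0$, approximately invariant means, while the diffuse, non-atomic structure of $[0,1]$ supplies the concentration. Concretely, one approximates a function in $\Lcero$ by a simple function constant on the pieces of a fine dyadic partition, presenting $\Lcero$ as the closure of an increasing union of subgroups isomorphic to finite powers $G^{2^n}$ carried with a normalized $\ell^1$-type metric. Averaging the near-invariant means of $G$ over the $2^n$ independent coordinates, a law-of-large-numbers/concentration estimate makes the resulting measures on (a compactification of) any orbit asymptotically invariant; a compactness argument then extracts an exactly invariant point, i.e. a fixed point for an arbitrary continuous action on a compact space.

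For part (2) I would again realize the group as a L\'evy group. The plan is to write $\Aut$ as the closure of an increasing chain of compact subgroups with dense union whose normalized Haar measures form a L\'evy family with a Gaussian-type concentration rate. The natural chain comes from dyadic approximation: at level $n$ one takes measure-preserving transformations adapted to the partition of $[0,1]$ into $2^n$ equal intervals, yielding compact groups on which concentration of measure follows from the high-dimensional geometry of the relevant symmetric and product structures. Once the L\'evy property is verified, extreme amenability is automatic, since for any continuous action on a compact space the pushforward measures concentrate onto a single orbit point, which must then be fixed.

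The main obstacle lies entirely in the quantitative concentration estimates underlying parts (1) and (2): checking that the dyadic approximating chains genuinely form L\'evy families with a usable rate is the technical heart of the matter, and it is precisely this that the cited works of Pestov--Schneider and Giordano--Pestov carry out in detail. Part (3), by contrast, is a purely topological-dynamical extension argument and presents no real difficulty.
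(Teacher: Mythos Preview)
The paper does not supply a proof of this theorem: it is stated with attributions to \cite{PS}, \cite{GP}, and \cite{Pe} and immediately used as a black box to derive the extreme amenability of $\tilde G$. There is therefore no in-paper argument to compare your proposal against.

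As for the proposal itself: your treatment of part~(3) is correct once the implicit normality of $H'$ (needed for $H/H'$ to be a group) is granted, and you rightly flag the typo in the conclusion. Your outline for part~(2) matches the Giordano--Pestov strategy: one approximates $\Aut$ by the finite groups permuting dyadic intervals of level $n$, equips them with the normalized Hamming metric, and invokes Maurey's concentration inequality for symmetric groups to obtain a L\'evy family with dense union. For part~(1) your sketch is in the right spirit but slightly optimistic. The pure L\'evy-family argument---approximating $L^0([0,1],G)$ by the powers $G^{2^n}$ and using product-measure concentration---works cleanly only when $G$ is \emph{compact}, because one needs an honest reference probability measure on $G$. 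For general amenable $G$ the F\o lner-type approximately invariant means you invoke are indeed the correct extra ingredient, but turning ``average the near-invariant means over $2^n$ coordinates and apply a law of large numbers'' into an actual proof of the L\'evy (or whirly) property is the substantive work of the Pestov--Schneider paper, not a one-line step. So your diagnosis of where the difficulty lies is accurate; just be aware that part~(1) does not reduce to the same mechanism as part~(2) without that additional input.
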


As a corollary of this, we obtain the following.

\begin{thm} If $G$ is an amenable group, then $\tilde{G}$ is extremely amenable. 
\end{thm}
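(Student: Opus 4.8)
The plan is to deduce the statement directly from Theorem \ref{Pestov} by exhibiting $\tilde G = L^0([0,1],G) \rtimes \Aut$ as an extension whose closed normal subgroup and quotient are both extremely amenable. First I would set $H = \tilde G$, $H' = L^0([0,1],G) \times \{Id\}$, which is a closed normal subgroup of $\tilde G$ (it is the kernel of the projection onto the second coordinate, and by the results of Section \ref{uct} the Polish topology on $\tilde G$ is the product topology, so this coordinate projection is continuous). Since $G$ is amenable, part (1) of Theorem \ref{Pestov} gives that $L^0([0,1],G)$ is extremely amenable, hence so is $H'$.

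Next I would identify the quotient $\tilde G / H'$. Because $L^0([0,1],G)$ is precisely the normal subgroup and $\Aut$ is a complement in the semidirect product, the quotient $\tilde G/H'$ is topologically isomorphic to $\Aut$ with its usual (weak) Polish topology — again this uses that the topology on $\tilde G$ is the product topology, so the quotient map restricted to $\{C_e\} \times \Aut$ is a homeomorphism onto $\tilde G/H'$. By part (2) of Theorem \ref{Pestov}, $\Aut$ is extremely amenable, so $\tilde G/H'$ is extremely amenable. Then part (3) of Theorem \ref{Pestov} applied to $H' \trianglelefteq \tilde G$ yields that $\tilde G$ is extremely amenable, which is the claim.

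The only real point requiring care — and the step I expect to be the main (minor) obstacle — is the verification that $H'$ is genuinely a \emph{closed} subgroup and that the quotient topology on $\tilde G/H'$ coincides with the Polish topology on $\Aut$, rather than being some coarser quotient topology; this is where the identification of the pointwise-convergence topology on $\tilde G$ with the product topology of $L^0([0,1],G)$ and $\Aut$ (established in Section \ref{uct}, following Ibarluc\'ia) is essential. Once that is in hand, everything else is a formal application of Pestov's extension result, so the proof is short. I would write it as: ``Take $H' = L^0([0,1],G) \rtimes \{Id\} \trianglelefteq \tilde G$. Since $G$ is amenable, $L^0([0,1],G)$ is extremely amenable by Theorem \ref{Pestov}(1); since the Polish topology on $\tilde G$ is the product topology, $H'$ is closed and $\tilde G/H' \cong \Aut$, which is extremely amenable by Theorem \ref{Pestov}(2). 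By Theorem \ref{Pestov}(3), $\tilde G$ is extremely amenable.''
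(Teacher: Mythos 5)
Your argument is correct, and it uses the same key tool as the paper -- Theorem \ref{Pestov} applied to the semidirect product decomposition of $\tilde G$ -- but with the two factors in opposite roles. The paper takes as its closed subgroup the copy $H=\{(C_e,S): S\in\Aut\}$ of $\Aut$ and identifies the coset space $\tilde G/H$ with $\Lcero$; you instead take the closed \emph{normal} subgroup $H'=L^0([0,1],G)\rtimes\{Id\}$ (the kernel of the continuous projection $(f,T)\mapsto T$) and identify the quotient group $\tilde G/H'$ with $\Aut$. Your choice has a small advantage: since the $\Aut$-action on $L^0([0,1],G)$ is nontrivial (for nontrivial $G$), the paper's subgroup is not normal, so $\tilde G/H$ is only a homogeneous $\tilde G$-space rather than a topological group, and strictly speaking one must invoke the version of Pestov's extension result for coset spaces (with the fixed point on compacta property); your $H'$ is normal and $\tilde G/H'\cong\Aut$ is a genuine topological group, so the statement of Theorem \ref{Pestov}(3) as quoted applies literally. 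Both arguments use exactly the same inputs: Pestov--Schneider for extreme amenability of $L^0([0,1],G)$ when $G$ is amenable, Giordano--Pestov for $\Aut$, and the identification of the Polish topology on $\tilde G$ with the product topology, which, as you correctly point out, is what gives closedness of the subgroup and ensures (via openness of the coordinate projection) that the quotient topology on $\tilde G/H'$ is the usual Polish topology of $\Aut$.
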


\begin{proof} If $e$ is the identity on $G$, let $C_e$ denote the constant function with 
value $e$. Take $H=\{(C_e,S)| S\in \Aut\}$. This is a closed subgroup of $\tilde{G}$ which is extremely amenable as a topological group. Note that $G/H$ isomorphic as a topological group to $\Lcero$. In fact, take the function $F:\Lcero \to G/H$ defined by $F(f)=(f,\mbox{id})H$. Since $G$ is amenable by Theorem \ref{Pestov} we get that 
$L^0([0,1],G)$ is extremely amenable and again by Theorem \ref{Pestov} $\tilde{G}$ is extremely amenable. 
\end{proof}

\section{Questions}\label{Questions}

Here are a few open questions that follow from the results in these papers.

\begin{enumerate}
\item Does $\tilde S_\infty$ have automatic continuity?

\item If $G$ has Rohklin property does it have Rohklin property under powers?

\item If $H$ and $G$ have the strong Rohklin property, does $G\rtimes H$ have the strong Rohklin property?

\end{enumerate}

\bibliographystyle{alpha} %Style of Bibliography: plain / apalike / amsalpha / ...
\bibliography{Biblio} %You need a file 'literature.bib' for this.

\end{document}